\documentclass[11pt]{amsart}
\usepackage{geometry,a4wide,amssymb,amsfonts,amsmath}  

\usepackage[alphabetic]{amsrefs}
\usepackage{bbm,xcolor}
\usepackage{dsfont}
\usepackage{yfonts}
\usepackage[greek, english]{babel}
\usepackage{mathtools}
\usepackage{mathrsfs}
\usepackage{hyperref}  
\usepackage{footmisc} 
\usepackage{comment}

\usepackage[normalem]{ulem}

\usepackage{utfsym}
\usepackage{twemojis}
\usepackage{fontawesome5}
\usepackage{figchild}

\usepackage{amsaddr}

\newcommand{\R}{\mathbb{R}}
\newcommand{\st}{\mathfrak{t}}

\newcommand{\N}{\mathbb{N}}

\newcommand{\Z}{\mathbb{Z}}

\newcommand{\F}{\mathcal{F}}

\newcommand{\M}{\mathcal{M}}
\newcommand{\Lt}{\mathcal{L}}

\newcommand{\var}{\mathtt{var}}
\newcommand{\ou}{\"o}

\newcommand{\1}{\mathds{1}}

\newcommand{\tei}{\mathtt{tei}}
\newcommand{\eie}{\mathtt{eie}}

\newcommand{\cpl}{\mathtt{Cpl}}

\newcommand{\lip}{\mathtt{Lip}}

\theoremstyle{plain}

\newtheorem{thm}{Theorem}[section]

\newtheorem{prop}[thm]{Proposition}
\newtheorem{cor}[thm]{Corollary}

\newtheorem{example}[thm]{Example}
\newtheorem{rmk}[thm]{Remark}

\title[Transport-entropy and EKP inequalities on countable Markov shifts]
{Effective intrinsic ergodicity and the transport-entropy inequality: a relation and applications
}

\author{\small{Mirmuhsin Maxmud}}
\address{\small{Research Unit of Mathematical Sciences, The University of Oulu}}
\email{Mirmukhsin.Makhmudov@oulu.fi}

\begin{document}

\begin{abstract}
Effective intrinsic ergodicity (EIE) quantifies how the pressure defect of an invariant measure controls its distance from an equilibrium state. 
In this paper, we make explicit the connection between the Einsiedler-Kadyrov-Polo (EKP) inequality, which is central to EIE, and the transport–entropy inequality (TEI).
We also extend the finite-state transport-entropy inequality of \cite{CMU19} to topologically mixing countable Markov shifts with the Big Images and Preimages (BIP) property, yielding an EIE estimate in Ornstein’s \(\bar d\)-metric that strengthens \cite{Ruh21}. 
We also establish a local $1/2$-Hölder bound for the map sending a Hölder potential of finite pressure to its Bowen--Gibbs measure, with respect to the uniform norm on potentials and the $\bar d$-metric on measures.
This result extends some aspects of Bhullar's continuity theorem \cite{B25}: it applies to proper subshifts, provides an explicit modulus of continuity with respect to the uniform norm, and does not require strong positive recurrence.

\end{abstract}

\maketitle

\section{Introduction}

In recent years, quantitative estimates on the uniqueness of equilibrium states have attracted growing attention in both ergodic theory and mathematical statistical mechanics. 
Two closely related approaches have developed largely in parallel, with, to the best of the author's knowledge, no cross-referencing despite sharing a common information-theoretic structure.
One purpose of this paper, among other things, is to make the connection between these approaches explicit.

On the ergodic-theoretic side, the notion of \textit{effective intrinsic ergodicity} (EIE) quantifies how the equilibrium state is distinguished from other invariant probability measures by their pressure defects. 
The underlying estimate originates in an argument of Einsiedler, recorded in Polo's thesis \cite{Polo-thesis}, and was developed further by Kadyrov \cite{Kad15} and subsequent authors \cites{Ruh16, Ruh21, RS22, P26, HKMM26, CK26}. 
More precisely, let \(\Omega\subset E^{\mathbb Z_+}\), where \(E\) is at most countable and is equipped with the discrete topology, be a topologically mixing subshift, and let \(S:\Omega\to\Omega\) denote the left shift. 
Suppose that \(\phi:\Omega\to\mathbb R\) is H\ou lder with finite pressure \(P(\phi)\), is bounded from above, and that \(\mu_\phi\) is its unique equilibrium state (see \eqref{eq: VP in incorporated form for cms}). 
We say that \(\mu_\phi\) is \textit{effectively intrinsically ergodic} if there exists a constant \(C_{\eie}>0\) such that, for every \(S\)-invariant probability measure \(\nu\in\mathcal M_{1,S}(\Omega)\) 
and every bounded H\"older function \(f:\Omega\to\mathbb R\),
\begin{equation}
\label{eq: ekp in its original form}
    \left|\int_\Omega f\,d\nu-\int_\Omega f\,d\mu_\phi\right|
    \leq C_{\eie}\,\|f\|_\theta\,
    \sqrt{P(\phi)-h(\nu)-\int_\Omega\phi\,d\nu},
\end{equation}
with the convention that if $(h(\nu),\int_\Omega \phi d\nu)= (+\infty,-\infty)$, then 
$h(\nu)+\int_\Omega \phi d\nu$ stands for 
$\int_\Omega (I_\nu+\phi)d\nu$,
where $\|f\|_\theta:=\|f\|_\infty+|f|_\theta$, $|f|_\theta$ is the Lipschitz seminorm with respect to the symbolic metric \(d_\theta\), \(h(\nu)\) denotes the measure-theoretic entropy of \(\nu\), and $I_\nu(\omega)=-\log\nu(\omega_0|\omega_1^\infty), \; \omega\in\Omega$ is the information function of $\nu$, for which $\int_\Omega I_\nu d\nu=h(\nu)$; see Proposition \ref{prop: rel ent den=press gap}. 
The estimate in \eqref{eq: ekp in its original form} is commonly referred to as the \textit{Einsiedler--Kadyrov--Polo} (EKP) inequality. 
By the Kantorovich--Rubinstein duality theorem \cite{Dudley-book}*{Theorem 11.8.2}, \eqref{eq: ekp in its original form} can also be written in a different (slightly stronger) form using the Wasserstein-1 metric $W_{1,d_\theta}$ defined on the space of probability measures $\M_1(\Omega)$: for every $\nu\in\mathcal M_{1,S}(\Omega)$,
\begin{equation}\label{eq: ekp in W_{1,d_theta} norm}
    W_{1,d_\theta}(\nu,\mu_\phi)\leq C_{\eie} 
    \sqrt{P(\phi)-h(\nu)-\int_\Omega \phi d\nu}.
\end{equation}

In parallel, the transport-entropy inequality alongside Gaussian concentration bounds and estimates in Ornstein's $\bar d$-metric \cites{Wu06, CMU19, CCR26} have been developed in the framework of statistical mechanics and probability.
In this setting, a 
shift-invariant probability measure $\mu$ on a subshift $\Omega\subset E^{\Z_+}$ is said to satisfy the \textit{\textbf{transport-entropy inequality}} (TEI) with a constant $C_{\tei}>0$ if for all shift-invariant measure $\nu\in\M_{1,S}(\Omega)$, the \textit{\textbf{relative entropy density}} $h(\nu|\mu)$ exists and
\begin{equation}\label{eq: tei}
    \bar d(\nu,\mu)\leq C_\tei \sqrt{h(\nu|\mu)}.
\end{equation}
We note that the relative entropy density is the infinite-volume analogue of relative entropy, and plays an important role in statistical mechanics.
In infinite-volume systems, one often needs to compare two mutually singular probability measures, such as those describing distinct phases of the same model \cite{Georgii-book}. 
In such cases, the ordinary relative entropy is infinite and therefore provides no finite measure of the discrepancy, whereas the relative entropy density may remain finite.
We define the relative entropy density in Section \ref{section: TEI implies EKP} and the Ornstein $\bar d$-distance in Section \ref{preliminaries}.

The connection between the TEI \eqref{eq: tei} and the EKP inequality \eqref{eq: ekp in its original form} becomes transparent via the following identity, which expresses the relative entropy density as the pressure defect:
\begin{equation}\label{eq: rel entropy=pressure in intro}
    h(\nu|\mu_\phi)
    =
    P(\phi)-\int_\Omega (I_\nu+\phi) d\nu,\;\; \nu\in \M_{1,S}(\Omega).
\end{equation}
The relation \eqref{eq: rel entropy=pressure in intro} holds true in very general setups, in particular, when the shift space $\Omega$ has the BIP property, introduced in Section \ref{preliminaries}, and the potential $\phi$ is H\ou lder with finite pressure; see Proposition \ref{prop: rel ent den=press gap}.

When $(\Omega,S)$ is a finite-state full shift, \eqref{eq: rel entropy=pressure in intro} has been proven for all -- possible multiple -- equilibrium states of an \textit{extensible potential} in \cite{MV2026}. 
This relation was first established by H. F\ou llmer in 1973 for finite-range interactions on $\mathbb Z^d$ with a finite state space \cite{Foll73}*{Eq. 4.25}, and has since been extended to various settings in statistical mechanics; see the bibliographical notes in \cite{Georgii-book}.
In ergodic theory, by contrast, the variational principle is more commonly expressed in its optimization form,
$$
P(\phi)=
\sup_{\nu\in\M_{1,S}(\Omega)}
\Big\{h(\nu)+\int_\Omega\phi \; d\nu\Big\}.
$$
The pressure-defect identity \eqref{eq: rel entropy=pressure in intro} makes explicit how the failure of an invariant measure to attain this supremum is quantified by its relative entropy density with respect to the equilibrium measure $\mu_\phi$.

Recently, Climenhaga and Kumari \cite{CK26} studied effective intrinsic ergodicity on topologically mixing subshifts of finite type (SFTs), establishing a quantitative estimate in Ornstein's $\bar d$-metric for the unique measure of maximal entropy, which is the equilibrium state of the zero potential. Their approach draws on Marton's concentration inequalities \cite{Mar96}. In the terminology of the present paper, their main result, Theorem A, can be expressed as a transport-entropy inequality for the measure of maximal entropy, with the relative entropy density given by the corresponding entropy defect through \eqref{eq: rel entropy=pressure in intro}. 
This TEI is also covered by the more general result of \cite{CMU19} (Theorem 3.9) whose applicability to topologically mixing SFTs is explicitly noted by its authors. 
This provides a concrete instance of the parallel development of the two approaches discussed above, and illustrates the value of making their relationship explicit.
Climenhaga and Kumari also present their result as strengthening Kadyrov's earlier estimate \cite{Kad15} by obtaining a corresponding estimate in Ornstein's $\bar d$-metric instead of the Wasserstein-$1$ metric.
The relationship between these two metrics is therefore relevant to understanding this strengthening. In Proposition \ref{prop: Was dist vs bar d dist}, we establish an explicit comparison between them, thereby clarifying the connection between Wasserstein estimates and estimates in the $\bar d$-metric.

In the present paper, we also extend the transport-entropy inequality of \cite{CMU19}*{Theorem 3.9} to the setup of topologically mixing countable Markov shifts with the BIP property; see Theorem \ref{thm: my tei on CMS}.
The proof of Theorem \ref{thm: my tei on CMS} relies on the Gaussian concentration bound established in \cite{MMNL21}*{Theorem 3.1} and the Bobkov-G\"otze theorem \cite{BG99}*{Theorem 3.1}. 
We then apply Theorem \ref{thm: my tei on CMS} to two questions in ergodic theory.

Our first application is to the effective intrinsic ergodicity of the equilibrium states for the H\"older potentials on the countable Markov shifts.
By combining Theorem \ref{thm: my tei on CMS} and Proposition \ref{prop: rel ent den=press gap}, we strengthen the main result of \cite{Ruh21} by obtaining a corresponding bound in Ornstein's $\bar d$ metric; see Corollary \ref{cor: EKP in bar d on CMS, recovery of Ruhr21}. 
This result also extends the effective intrinsic ergodicity estimate of \cite{CK26} to the countable Markov shifts.
We also note that the setting of Corollary \ref{cor: EKP in bar d on CMS, recovery of Ruhr21} differs substantially from that of \cite{RS22}. The former assumes that the shift space $(\Omega,S)$ has the BIP property, whereas the latter assumes finite Gurevich entropy. For a topologically mixing countable Markov shift, these two conditions hold simultaneously if and only if the state space $E$ is finite. Thus, in the infinite-state case, our result and those of \cite{RS22} address disjoint regimes.

Our second application concerns the continuity, in the $\bar d$ topology, of the map $\phi\mapsto\mu_\phi$ that sends a
Hölder potential $\phi$ to its equilibrium state $\phi$, a question raised in \cite{CQ98}.
Recently, Bhullar showed that this map is continuous in the $\bar d$ topology when the countable Markov shift $\Omega$ is full, i.e., $\Omega=E^{\Z_+}$ \cite{B25}*{Theorem 1.1}.
By Theorem \ref{thm: my tei on CMS}, we obtain a relevant quantitative estimate. 
If $\Omega$ be topologically mixing countable Markov shift with the BIP property, then for every H\ou lder potential $\phi$ with finite pressure, there exists a constant $C(\phi)$ such that for arbitrary H\"older potential $\psi$ on $\Omega$ with finite pressure,
\begin{equation}\label{eq: Holderianity of Coelho-Quas's map intro}
    \bar d(\mu_\phi,\mu_\psi)
    \leq
    C(\phi) \lVert \phi- \psi\rVert_\infty^{\frac 12},
\end{equation}
where $\mu_\phi$ and $\mu_\psi$ are the corresponding Bowen-Gibbs measures; see Corollary \ref{assertion: Holder continuity of Coelho-Quas map}.
This estimate improves the results of \cites{CQ98, B25} in two directions:
\begin{itemize}
    \item[(1)] it applies to the proper subshifts of $E^{\Z_+}$ not only the full shift $E^{\Z_+}$;
    
    \item[(2)] it provides
    an explicit modulus of continuity with respect to the uniform metric on the space of potentials.
    This distinguishes it from the continuity results of Bhullar \cite{B25} and Coelho and Quas \cite{CQ98}, which employ metrics induced by norms that control the variations of the potential in addition to its uniform norm. 
    Thus, \eqref{eq: Holderianity of Coelho-Quas's map intro} establishes continuity with respect to a weaker notion of closeness between potentials.
\end{itemize}

The paper is organised as follows: in Section \ref{preliminaries}, we introduce the necessary notions for the arguments of this paper; 
in Section \ref{section: TEI implies EKP}, we make explicit the relation between the EKP and transport-entropy inequalities through Proposition \ref{prop: rel ent den=press gap} and Proposition \ref{prop: Was dist vs bar d dist}; 
Section \ref{Section: TEI on CMS}, is dedicated to Theorem \ref{thm: my tei on CMS} which establishes the transport-entropy inequality on countable Markov shifts and to its proof and its consequences such as Corollary \ref{cor: EKP in bar d on CMS, recovery of Ruhr21} and Corollary \ref{assertion: Holder continuity of Coelho-Quas map}; and the last section is dedicated to the discussion of the scope of transport-entropy inequality and its relations with other concentration inequalities.

\section{Preliminaries 
}\label{preliminaries}

\subsection{Setup}
We consider a finite or countably infinite set $E$ for the state space, $E\times E$ matrix $\mathtt{A}=(\mathtt{A}_{ab})$ of $0$ and $1$'s, and the associated countable Markov shift $\Omega$, which is defined as
$$
\Omega:=\{\omega\in E^{\Z_+}: \mathtt{A}_{\omega_i\omega_{i+1}}=1, \; i\in\Z_+\}.
$$
A finite word $\omega_0\omega_1\dots \omega_{n-1}$ of $n$ symbols of $E$, abbreviated as $\omega_0^{n-1}$, is called admissible if the corresponding cylinder set $[\omega_0^{n-1}]:=\{\xi\in\Omega: \xi_i=\omega_i \text{ for } i=0,\dots n-1\}$ is nonempty.
We denote the set of admissible words of length $n$ by $\Lt_n$.
We equip $\Omega$ with the left-shift map $S:\Omega\to\Omega$, $S(\omega)=(\omega_{i+1})_{i\in\Z_+}, \; \omega\in\Omega$ and the metric $d_\theta(\xi,\eta):=\theta^{\min\{i\in\Z_+:\xi_i\neq \eta_i \}}$ for $\xi,\eta\in\Omega$ with the convention that $d_\theta(\xi,\xi)=0$, where $\theta\in(0,1)$.
Throughout the paper, we assume that the dynamical system $(\Omega, S)$ is \textit{topologically mixing}, i.e., for all $a,b \in E$, there exists a number $N(a,b)$ such that for every $n\geq N(a,b)$, there is an admissible word $\omega_0^{n-1}$ of length $n$ satisfying $\omega_0=a$ and $\omega_{n-1}=b$.
It is said that $(\Omega,S)$ satisfies the \textit{Big Images and Preimages} (BIP) property if there exists a finite set $\{b_1,\dots b_N\}\subset E$ of states such that for every $a\in E$, there exist $i,j$ satisfying $\mathtt{a}_{b_ia}\mathtt{a_{ab_j}}=1$ \cite{s99}.
We note that if $E$ is finite, then $\Omega$ automatically has the BIP property.

For a volume $V\subset \Z_+$, $\F_V$ denotes the smallest $\sigma-$algebra that makes, for every $n\in V$, the projection maps $\omega\in\Omega\mapsto \omega_n\in E$ measurable.
We use $\F$ for $\F_{[0,\infty)}$.
We denote the set of probability measures on $\Omega$ by $\M_{1}(\Omega)$, and the subset of shift-invariant measures by $\M_{1,S}(\Omega)$.
For a potential $\phi:\Omega\to\R$, we denote its $n$-th variation by $\var_n\phi$, i.e., 
$$
\var_n\phi:=\sup\{\phi(\xi)-\phi(\eta): \xi,\eta\in\Omega, \;\; \xi_0^{n-1}=\eta_0^{n-1}\}.
$$
For an ergodic sum $\sum_{i=0}^{n-1}\phi\circ S^{i}$ we use the notation $S_n\phi$.
We call a potential $\phi:\Omega\to\R$ \textit{H\ou lder}, if its Lipschitz constant $|\phi|_{\theta}:=\sup_{n\geq 1}\frac{\var_n\phi}{\theta^n}$ is finite.
We note that when $E$ is infinite, H\ou lder continuity does not imply the boundedness of the potential.

\subsection{The variational principle for the countable Markov shifts}\label{section: vp for cms}
Consider a H\ou lder potential  $\phi:\Omega\to\R$ on a topologically mixing countable Markov shift $\Omega$ satisfying the BIP property.
The (Gurevich) pressure $P(\phi)$ of $\phi$ is defined as the limit
\begin{equation}\label{eq: gurevich pressure}
    P(\phi):=\lim_{n\to\infty}\frac{1}{n}\log \sum_{\omega: S^n\omega=\omega} e^{S_n\phi(\omega)}\1_{[a_0]}(\omega),
\end{equation}
where $a$ is an element of the state space $E$.
In \cite{s99}*{Theorem 1}, it is shown that the above limit exists, which could be $+\infty$ but not $-\infty$, and is independent of the letter $a\in E$.
Throughout the paper, we consider potentials with finite pressure. 
In light of \cite{s99}*{Theorem 3} and \cite{s03}*{Corollary 2}, if the pressure is finite and the shift space $\Omega$ has the BIP property, then the following variational equation holds:
\begin{equation}\label{eq: VP in separated form for cms}
    P(\phi)=\sup\Big \{h(\nu)+\int_\Omega\phi d\nu: \nu\in\M_{1,S}(\Omega),\; \int_\Omega\min\{\phi,0\} d\nu>-\infty\Big\},
\end{equation}
where $h(\nu):=\lim_{n\to\infty} -\frac{1}{n}\sum_{\omega_0^{n-1}\in \Lt_n}\nu([\omega_0^{n-1}])\log \nu([\omega_0^{n-1}])\in[0,\infty]$ is the measure-theoretic entropy of the measure $\nu$.
A measure $\nu\in\M_{1,S}(\Omega)$ that realises the variational equation \eqref{eq: VP in separated form for cms} is called an \textit{equilibrium state} for the potential $\phi$.
In this sense,
in the countable state setting, unlike the finite state setting, an equilibrium state may not exist, even for a H\ou lder potential.

The condition $\int_\Omega\min\{\phi,0\} d\nu>-\infty$ in the variational equation \eqref{eq: VP in separated form for cms} is important; otherwise one can end up with uncertainty "$\infty-\infty$" in $h(\nu)+\int_\Omega\phi d\nu$, as in the case in the following folklore example.

\begin{example}\label{the folklore example in cms}
Let $\Omega:=\Z_{+}^{\Z_+}$ and pick a probability mass $\{p_i>0: i\in\Z_+\}$ such that $\sum_{i=0}^\infty p_i\log p_i=-\infty$.
Consider a potential
$
\varphi(\omega)=\log p_{\omega_0}, \; \omega\in\Omega
,$
which is H\ou lder since for every  $n\in\N$, $\var_n\varphi=0$, and also consider the Bernoulli measure $\lambda:=(\sum_{i\in\Z_+} p_i\delta_{\{i\}})^{\Z_+}$ on $\Omega$.
By our choice, $\lambda$ is shift-invariant and $h(\lambda)=+\infty$.
Furthermore, it can be easily checked that $\int_\Omega \varphi d\lambda=-\infty$ and  $P(\varphi)=0$.    
\end{example}
Nevertheless, one can still drop the condition $\int_\Omega\min\{\phi,0\} d\nu>-\infty$ at the cost of incorporating the two terms in $h(\nu)+\int_\Omega\phi d\nu$ into one integral $\int_\Omega(I_\nu+\phi)d\nu$, since $\int_\Omega I_\nu d\nu=h(\nu)$.
After such incorporation, as it is shown in \cite{s99}*{Corollary 2}, we have the following variational equation:
\begin{equation}\label{eq: VP in incorporated form for cms}
    P(\phi)=\sup\Big \{\int_\Omega(I_\nu+\phi)d\nu: \nu\in\M_{1,S}(\Omega)\Big\}.
\end{equation}
For a topologically mixing countable Markov shift $\Omega$ with the BIP property and for a H\ou lder potential $\phi$ with finite pressure, 
the supremum in \eqref{eq: VP in incorporated form for cms} is actually a maximum, namely, there exists a unique measure $\nu\in\M_{1,S}(\Omega)$ such that $\int_\Omega(I_\nu+\phi)d\nu=P(\phi)$ \cite{s99}*{Corollary 2}. 
Thus, if we slightly broaden the notion of an equilibrium state to include any measure that realises the variational equation \eqref{eq: VP in incorporated form for cms}, rather than requiring the separated form \eqref{eq: VP in separated form for cms}, we conclude that every Hölder potential $\phi$ with finite pressure on a topologically mixing countable Markov shift $\Omega$ with the BIP property admits a unique equilibrium state in this generalised sense.
Thus, in particular, in Example \ref{the folklore example in cms}, the potential $\varphi$ has an equilibrium state in our generalised sense, which is $\lambda$, since $I_\lambda+\varphi$ is $\lambda$-integrable and $\int_\Omega (I_\lambda+\varphi)d\lambda=0=P(\varphi)$.
However, as $h(\lambda)+\int_\Omega\phi d\lambda$ is not defined, $\varphi$ does not have an equilibrium state in the sense of \eqref{eq: VP in separated form for cms}.

\begin{rmk}
    The validity of the variational equation \eqref{eq: VP in separated form for cms} and the existence of an equilibrium state for a potential $\phi$ is a very subtle issue for the countable Markov shifts, because of the absence of compactness of the underlying space.
    However, if $\Omega\Subset E^{\Z_+}$ is a compact subshift, then all these subtleties disappear.
    Namely, for any continuous potential $\phi\in C(\Omega)$, the pressure \eqref{eq: gurevich pressure} is well-defined and finite and
    \begin{equation}\label{eq: vp for compact subshifts}
        P(\phi)=\sup\Big \{h(\nu)+\int_\Omega\phi d\nu: \nu\in\M_{1,S}(\Omega)\Big\}.
    \end{equation}
    Furthermore, there exists an equilibrium state $\mu$ for $\phi$, though may not be unique. 
\end{rmk}

\subsection{Wasserstein and \texorpdfstring{$\bar d$}{d-bar} distances}
Let $(X,d)$ be a bounded separable metric space, and consider probability measures $\mu,\nu$ on $X$. 
A coupling $\Pi$ of $\mu$ and $\nu$ is a probability measure on $X\times X$ whose marginals are $\mu$ and $\nu$; that is, for every Borel set $B$, $\Pi(B\times X)=\mu(B)$ and $\Pi(X\times B)=\nu(B)$.
We denote the set of all couplings of $\mu$ and $\nu$ by $\cpl(\mu,\nu).$
Then the Wasserstein-1 distance between $\mu$ and $\nu$ is defined by
\begin{equation}
    W_{1,d}(\mu,\nu):=\inf\Big\{\int_{X\times X} d(x,y) \Pi(dx,dy): \Pi\in\cpl(\mu,\nu) \Big\}.
\end{equation}
By the Kantorovich-Rubinstein theorem \cite{Dudley-book}*{Theorem 11.8.2}, $W_{1,d}$ can also be given as
\begin{equation}
    W_{1,d}(\mu,\nu)
    =
    \sup\Big\{ \int_X f d\mu- \int_X f d\nu: f\in C(X),\; \sup_{x\neq y} \frac{|f(x)-f(y)|}{d(x,y)}\leq 1\Big\}.
\end{equation}
For shift-invariant probability measures $\mu,\nu\in\M_{1,S}(\Omega)$, a \textit{joining} of $\mu$ and $\nu$ is a coupling $\Pi\in\M_{1}(\Omega\times\Omega)$, which is also shift-invariant, i.e., for every $B_1,B_2\in\F$, $\Pi(B_1\times B_2)=\Pi(S^{-1}(B_1)\times S^{-1}(B_2))$.
We denote the set of all joinings between shift-invariant measures $\mu$ and $\nu$ by $\cpl_S(\mu,\nu)$.

Between two finite words $\xi_0^{n-1}, \eta_0^{n-1}\in\Lt_n$, we define the Hamming distance $d^{(n)}(\xi_0^{n-1}, \eta_0^{n-1})$ by
\begin{equation*}
    d^{(n)}(\xi_0^{n-1}, \eta_0^{n-1})
    :=
    \frac{1}{n}\sum_{i=0}^{n-1}\rho(\xi_i,\eta_i),
\end{equation*}
where $\rho$ denotes the discrete metric on $E$, i.e., for $a,b\in E$, $\rho(a,b)=1$ if $a\neq b$ and $0$ otherwise.
For probability measures, $\nu,\mu\in\M_{1,S}(\Omega)$, let's denote their marginals $\nu|_{\F_{[0,n)}}$, $\mu|_{\F_{[0,n)}}$ by $\nu_n$ and $\mu_n$ respectively, and we denote the restriction of $\Omega$ to the volume $[0,n-1]$, which can be identified with $\Lt_n$, by $\Omega_n$.
Then the Wasserstein-1 distance between $\mu_n$ and $\nu_n$ is
\begin{equation}
    W_{1,d^{(n)}}(\mu_n,\nu_n)=\inf\Big \{\int_{\Omega_n\times\Omega_n} d^{(n)}(\xi_0^{n-1}, \eta_0^{n-1})\Pi_n(d\xi_0^{n-1}, d\eta_0^{n-1}): \Pi_n\in \cpl(\mu_n,\nu_n) \Big\}.
\end{equation}
Then \cite{GNS75}*{Theorem 1} yields the following proposition.
\begin{prop}\label{prop: bar d is defined in infinite volume}
    For all shift-invariant measures $\mu,\nu\in\M_{1,S}(\Omega)$, the finite-volume distances 
    $
    W_{1,d^{(n)}}(\mu_n,\nu_n)
    $
    converge to $\bar d(\mu,\nu):=\sup_{n} W_{1,d^{(n)}}(\mu_n,\nu_n)$ as $n\to\infty$.
    Furthermore, $\bar d(\mu,\nu)$ equals to 
    \begin{equation}\label{eq: equiv of Hamming distance and joining def of bar d}
        \inf\Big\{\int_{\Omega\times\Omega} \rho(\xi_0, \eta_0) \Pi(d\xi,d\eta): \Pi\in \cpl_S(\mu,\nu) \Big\}.
    \end{equation}
\end{prop}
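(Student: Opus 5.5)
We prove three things: that $a_n:=W_{1,d^{(n)}}(\mu_n,\nu_n)$ converges, that its limit equals $\sup_n a_n$, and that this common value equals the joining infimum \eqref{eq: equiv of Hamming distance and joining def of bar d}. Write $J$ for that infimum.

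\textbf{Step 1: superadditivity.} We show that $n a_n$ is superadditive. Fix an optimal coupling $\Pi_{n+m}$ of $\mu_{n+m}$ and $\nu_{n+m}$; it exists because $\Omega_{n+m}\times\Omega_{n+m}$ is countable and discrete, and the cost is bounded, so tightness gives a minimiser. Project $\Pi_{n+m}$ onto the coordinates $[0,n)$ and onto $[n,n+m)$. Shift-invariance of $\mu$ and $\nu$ says the second projection is a coupling of $\mu_m$ and $\nu_m$. Since the Hamming cost is additive,
\[
(n+m)\,a_{n+m}\;\ge\; n\,a_n+m\,a_m .
\]
Fekete's lemma then gives $a_n\to\sup_n a_n$, which proves the first two claims.

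\textbf{Step 2: $\sup_n a_n\le J$.} Take any joining $\Pi\in\cpl_S(\mu,\nu)$ and restrict it to $[0,n)$. This gives a coupling of $\mu_n$ and $\nu_n$ with cost
\[
\frac1n\sum_{i<n}\Pi(\xi_i\ne\eta_i)=\Pi(\xi_0\neq\eta_0),
\]
by invariance of $\Pi$. Hence $a_n\le J$ for every $n$.

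\textbf{Step 3: $J\le\lim_n a_n$.} This is the main obstacle, because a coupling of finite blocks is not shift-invariant. The plan is to build an invariant joining from optimal block couplings.
\begin{enumerate}
\item Form the product of independent copies of $\Pi_n$ on consecutive blocks $[kn,(k+1)n)$. This is a measure on $E^{\Z_+}\times E^{\Z_+}$ that is $S^n$-invariant but not $S$-invariant.
\item Average it over the shifts $S^0,\dots,S^{n-1}$ to obtain an $S$-invariant measure $\tilde\Pi_n$. Its cost $\tilde\Pi_n(\xi_0\ne\eta_0)$ equals $a_n$.
\item Its marginals are not exactly $\mu$ and $\nu$: they are averaged block-concatenations of $\mu_n$ and of $\nu_n$. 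One might worry that concatenating admissible blocks leaves $\Omega$, but this is harmless, since only the cylinder statistics matter.
\end{enumerate}

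Let $n\to\infty$. For a fixed cylinder of length $k$, the marginal probabilities differ from those of $\mu$ by $O(k/n)$, because only windows straddling a block boundary contribute. So the marginals converge weakly, cylinderwise, to $\mu$ and $\nu$.

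To extract a limit we need tightness, which needs care when $E$ is infinite. The marginals of $\tilde\Pi_n$ have the same one-coordinate distributions as $\mu$ and $\nu$, namely $\mu\circ\pi_0^{-1}$ and $\nu\circ\pi_0^{-1}$, since each block has the correct marginal. Tightness of each coordinate then gives tightness of the product, by a diagonal argument in the product topology.

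Take a weak limit $\Pi$ along a subsequence. Then $\Pi$ is $S$-invariant, and its marginals agree with $\mu$ and $\nu$ on all cylinders, hence everywhere; in particular $\Pi$ is supported on $\Omega\times\Omega$. So $\Pi\in\cpl_S(\mu,\nu)$. The set $\{\xi_0\ne\eta_0\}$ is clopen, so its probability passes to the limit, giving
\[
\Pi(\xi_0\ne\eta_0)=\lim_n a_n .
\]
Thus $J\le\lim_n a_n$. Combined with Steps 1 and 2, this shows $J=\lim_n a_n=\sup_n a_n$, as claimed.
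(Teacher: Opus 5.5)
Your proof is correct, and it takes a different route from the paper only in that the paper gives no argument at all: it derives the proposition directly from Theorem 1 of \cite{GNS75}, which handles $\bar\rho$-distances for general stationary processes and additive fidelity criteria.

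You instead give a self-contained proof along classical lines, in three parts:
\begin{itemize}
\item superadditivity of $n a_n$ together with Fekete's lemma gives $a_n\to\sup_n a_n$;
\item restricting a stationary joining to $[0,n)$ gives $\sup_n a_n\le J$;
\item concatenating independent optimal block couplings, averaging over $n$ shifts, and passing to a weak limit gives $J\le\lim_n a_n$.
\end{itemize}

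The citation buys brevity and generality. Your version makes explicit the places where the countable, non-full-shift setting needs care.
\begin{itemize}
\item \emph{Tightness when $E$ is infinite.} You correctly obtain this from the fact that every coordinate of $\tilde\Pi_n$ has the fixed laws $\mu\circ\pi_0^{-1}$ and $\nu\circ\pi_0^{-1}$. The clean justification is not a ``diagonal argument'': pick finite $F_i\subset E$ with $\mu(\xi_0\notin F_i)+\nu(\eta_0\notin F_i)<\varepsilon 2^{-i-1}$ and use compactness of $\prod_i F_i\times\prod_i F_i$.
\item \emph{Non-admissible concatenations.} Concatenated blocks may leave $\Omega$. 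You handle this by working in $E^{\Z_+}\times E^{\Z_+}$ and recovering the support on $\Omega\times\Omega$ from the cylinder estimate $|\tilde\mu^{(n)}([w])-\mu([w])|\le (k-1)/n$.
\end{itemize}

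As a by-product, your weak limit $\Pi$ shows that the infimum over $\cpl_S(\mu,\nu)$ in the joining formula is attained. The statement does not assert this.
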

$\bar d$ in Proposition \ref{prop: bar d is defined in infinite volume} becomes a metric on the space of shift-invariant probability measures $\M_{1,S}(\Omega)$ \cite{GNS75}*{page 317}, and the topology generated by $\bar d$ is finer than the weak$^*$ topology on $\M_{1,S}(\Omega)$ (see \cite{Shields-book}*{Section I.9.b.4} and also Proposition \ref{prop: Was dist vs bar d dist} below).

\subsection{Gaussian concentration bound}
A function $K:\Omega^n\to\R$ is to be called \textit{separately $d_\theta$-Lipschitz} if, for all $i$, there exists a constant $\lip_{\theta, i}(K)>0$ with
\begin{equation}
    \lvert K(x_0,\dots, x_i, \dots, x_{n-1}) - K(x_0, \dots, x_i',\dots, x_{n-1}) \rvert 
    \leq
    \lip_{\theta, i}(K)d_\theta(x_i,x_i'),
\end{equation}
for all $x_0,\dots,x_i,\dots, x_{n-1}, x_i'\in\Omega$.
We say a probability measure $\mu\in \M_1(\Omega)$ satisfies the \textit{Gaussian concentration bound} with a constant $D$ if for every $n\in\N$ and for every separately $d_\theta$-Lipschitz function $K:\Omega^n\to\R$,
\begin{equation}\label{eq: global GCB def}
    \int_\Omega e^{K(x, Sx,\dots, S^{n-1}x) } \mu(dx)
    \leq
    \exp{\Big(\int_\Omega K(x, Sx,\dots, S^{n-1}x) \mu(dx)\Big) }
    e^{D\sum_{i=0}^{n-1}\lip_{\theta,i}(K)^2}.
\end{equation}

In \cite{CMU19}, in the setup of finite state-space, authors established the Gaussian concentration bound for the equilibrium state of potentials with a subexponential continuity rate.
Later, in \cite{MMNL21}, authors extended this work by establishing Gaussian concentration bounds for the countable Markov shifts.

\begin{thm}\cite{MMNL21}*{Theorem 3.1}\label{eq: GCB for Markov shifts}
Let $(\Omega,S)$ be a topologically mixing countable Markov shift satisfying the BIP property.
Assume $\phi:\Omega\to\R$ is a H\"older potential with finite pressure, and $\mu$ is its associated equilibrium state (in the generalised sense).
Then $\mu$ satisfies the Gaussian concentration bound \eqref{eq: global GCB def} with some constant $D$.

\end{thm}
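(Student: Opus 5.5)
The plan is to follow the martingale-difference strategy of \cite{CMU19}, replacing the finite-alphabet ingredients by the thermodynamic formalism available on BIP countable Markov shifts. Fix $n$ and a separately $d_\theta$-Lipschitz $K:\Omega^n\to\R$, and set $F(x):=K(x,Sx,\dots,S^{n-1}x)$. Since $d_\theta\le 1$, changing the $i$-th argument of $K$ moves it by at most $\lip_{\theta,i}(K)$. Hence $F$ is bounded after subtracting a constant, and all exponential moments below are finite.

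\textbf{Step 1 (decomposition).} Use the decreasing filtration $\mathcal G_j:=\F_{[j,\infty)}=S^{-j}\F$, $j\ge 0$. Since $F$ is determined by $\mathcal G_0=\F$ and $\bigcap_j\mathcal G_j$ is $\mu$-trivial (exactness of the equilibrium state, which follows from mixing and the Ruelle--Perron--Frobenius theorem of \cite{s99}), we can write
\[
F-\int_\Omega F\,d\mu=\sum_{j\ge 0}\Delta_j,\qquad \Delta_j:=\mathbb E_\mu[F\mid\mathcal G_j]-\mathbb E_\mu[F\mid\mathcal G_{j+1}],
\]
a reverse martingale difference sequence. The series converges in $L^2(\mu)$; for the exponential estimate I will truncate at a finite $m$ and let $m\to\infty$ using the trivial tail. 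The conditional expectations are given by the normalized transfer operator. Choosing $\phi$ normalized, so that $\mathcal L_\phi 1=1$ and $\mathcal L_\phi^*\mu=\mu$ (possible after adding a coboundary and constant, which changes neither $\mu$ nor the statement), we have
\[
\mathbb E_\mu[f\mid S^{-j}\F]=(\mathcal L_\phi^j f)\circ S^j .
\]

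\textbf{Step 2 (oscillation bounds).} This is the heart of the argument. I aim to show that there are constants $C>0$ and $\lambda\in(0,1)$, depending only on $\phi$, such that
\[
\operatorname{osc}(\Delta_j)\le C\sum_{i=0}^{n-1}\lip_{\theta,i}(K)\,\lambda^{|i-j|}.
\]
For $i\ge j$, the dependence of $\Delta_j$ on the coordinates entering through $S^i x$ is controlled directly by $\lip_{\theta,i}(K)$, since $d_\theta(S^ix,S^iy)\le\theta^{i-j}$ once $x,y$ agree from position $j$ onward. For $i<j$, the coordinate enters only after averaging by $\mathcal L_\phi^{j-i}$. Here one needs, uniformly in the unbounded alphabet, the following two facts.
\begin{enumerate}
\item[(a)] Bounded distortion: the weights $e^{S_k\phi}$ of preimages have distortion bounded uniformly in $k$. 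The BIP property gives the uniform Gibbs property of $\mu$ \cite{s03}.
\item[(b)] Exponential convergence of $\mathcal L_\phi^k g$ to $\int g\,d\mu$ in sup norm for bounded $d_\theta$-Lipschitz $g$, with rate $\lambda^k\|g\|_\theta$. This is the spectral gap on bounded Hölder functions for BIP shifts, equivalently a Doeblin-type coupling condition supplied by the finite set $\{b_1,\dots,b_N\}$.
\end{enumerate}
I expect Step 2 to be the main obstacle. The issue is that these estimates must not depend on $\|\phi\|_\infty$, which may be infinite, but only on $|\phi|_\theta$ and the Gibbs constants. My first attempt is a coupling argument: run two preimage chains and use BIP to force them into a common symbol $b_i$ with probability bounded below at each step, which yields the geometric factor $\lambda$.

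\textbf{Step 3 (Azuma--Hoeffding and summation).} Applying the conditional Hoeffding lemma successively to the $\Delta_j$ gives
\[
\int_\Omega e^{F}\,d\mu\le e^{\int F d\mu}\exp\Big(\tfrac18\sum_j\operatorname{osc}(\Delta_j)^2\Big).
\]
Young's inequality for the discrete convolution of $(\lip_{\theta,i}(K))_i$ with $(\lambda^{|k|})_k$ then bounds $\sum_j\operatorname{osc}(\Delta_j)^2$ by $C^2\big(\frac{1+\lambda}{1-\lambda}\big)^2\sum_i\lip_{\theta,i}(K)^2$. This gives \eqref{eq: global GCB def} with
\[
D=\tfrac18 C^2\Big(\tfrac{1+\lambda}{1-\lambda}\Big)^2,
\]
which is independent of $n$ and $K$.
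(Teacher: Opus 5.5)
Your proposal is correct and follows essentially the route behind this statement: the paper gives no proof of its own but quotes \cite{MMNL21}, which (as the paper notes) extends the Gaussian concentration result of \cite{CMU19} to BIP countable Markov shifts. Your scheme, with conditional expectations via the normalized transfer operator, Gibbs distortion and the spectral gap on bounded H\"older functions supplied by BIP, conditional Hoeffding and Young's inequality, is the standard martingale-difference argument of that line of work.

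One correction to Step~2: your justification of the $i\ge j$ case is garbled. $\Delta_j$ is $\mathcal G_j$-measurable, and its range given $\mathcal G_{j+1}$ comes only from varying $x_j$, so slots $i>j$ contribute nothing (your two-sided bound is still true, since it is weaker). The estimate actually needed is $\|\Delta_j\|_\infty\le C\sum_{i\le j}\lip_{\theta,i}(K)\lambda^{j-i}$. It is obtained by telescoping $K$ one integrated slot at a time and applying (b) to $\mathcal L^{i+1}$ of the $i$-th increment. You cannot apply (b) to $F$ directly, because its $d_\theta$-Lipschitz constant can be as large as $\sum_i\lip_{\theta,i}(K)\theta^{-i}$.
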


\section{The transport-entropy inequality implies the EKP inequality}\label{section: TEI implies EKP}

\subsection{The pressure defect as the relative entropy density}
Recall that \textit{the relative entropy} between finite volume marginals $\nu_n=\nu|_{\F_{[0,n)}}, \mu_n=\mu|_{\F_{[0,n)}}$ of two probability measures $\mu,\nu\in\M_{1}(\Omega)$ is defined by
\begin{equation}
    H(\nu_n|\mu_n):=\int_\Omega \log \frac{d\nu_n}{d\mu_n} d\nu_n
    =
    \sum_{a_0^{n-1}\in\Lt_n} \nu([a_0^{n-1}]) \log \frac{\nu([a_0^{n-1}])}{\mu([a_0^{n-1}])}
    \geq 
    0.
\end{equation}
For shift-invariant probability measures $\nu,\mu\in\M_{1,S}(\Omega)$, the \textbf{\textit{relative entropy density}} $h(\nu|\mu)$ is defined by the limit
\begin{equation}\label{def of rel entr dens}
    h(\nu|\mu):=\lim_{n\to\infty} \frac{1}{n}H(\nu_n|\mu_n),
\end{equation}
provided the limit exists, and it is allowed to be $+\infty$.
The existence of the limit in \eqref{def of rel entr dens} is not automatic even in the finite state-space setting.
However, the existence of relative entropy density is guaranteed under some mild conditions on $\mu$, such as Pfister's asymptotic decoupling \cite{Pfi02}*{Proposition 3.2}.
The (DLR) Gibbs measures, or their renormalisation group transformations, enjoy the asymptotic decoupling property \cite{Pfi02}.
Another rather dynamical condition, applies when $(\Omega,S)$ is compact and guarantees the existence of relative entropy density, is the \textit{weak Bowen-Gibbs property}.
We call a shift-invariant probability measure $\mu\in\M_{1,S}(\Omega)$ to have the weak Bowen-Gibbs property with respect to a potential $\phi\in C(\Omega)$
if there exist constants $P\in\R$ and $C_n\geq 1, \; n\in\N$ such that $\lim_{n\to\infty}\frac{1}{n}\log C_n=0$ and for every $n\in\N$ and $\omega\in\Omega$,
\begin{equation}\label{weak BG}
    \frac{1}{C_n}\leq \frac{\mu([\omega_0^{n-1}])}{e^{S_n\phi(\omega)-nP}}
    \leq
    C_n.
\end{equation}
When the state space $E$ is finite, the weak Bowen-Gibbs property also implies that $P$ is the pressure and $\mu$ is an equilibrium state of the potential $\phi$, as well as it implies the following relation between the relative entropy density and the pressure defect: for every $\nu\in\M_{1,S}(\Omega)$,
\begin{equation}\label{eq: rel ent dens=pressure gap for sfts}
    h(\nu|\mu)
    =
    P-h(\nu)-\int_\Omega\phi d\nu.
\end{equation}
In the literature, the weak Bowen-Gibbs property, as well as the relation \eqref{eq: rel ent dens=pressure gap for sfts}, have been established in various setups: see \cite{Georgii-book} for the account in statistical mechanics and \cite{MV2026} for the account in ergodic theory.

The measures considered in this paper have a property slightly stronger than \eqref{weak BG}.
A shift-invariant probability measure $\mu$ on the  shift space $\Omega$ is called \textit{Bowen-Gibbs measure} for a potential $\phi\in C(\Omega)$, if there exist constants $P\in\R$ and $C\geq 1$ such that for every $n\in\N$ and $\omega\in\Omega$,
\begin{equation}\label{eq: BG}
    \frac{1}{C}\leq \frac{\mu([\omega_0^{n-1}])}{e^{S_n\phi(\omega)-nP}}
    \leq
    C.
\end{equation}

The following proposition establishes the existence of the relative entropy density as well as the relation \eqref{eq: rel ent dens=pressure gap for sfts} in the setting of countable Markov shifts.

\begin{prop}\label{prop: rel ent den=press gap}
    Let $(\Omega,S)$ be a topologically mixing countable Markov shift.
    Assume a shift-invariant probability measure $\mu\in\M_{1,S}(\Omega)$ satisfies the Bowen-Gibbs property \eqref{eq: BG} with respect to a
    H\"older
    potential $\phi$.
    Then for any shift-invariant probability measure $\nu\in\M_{1,S}(\Omega)$,
    the relative entropy density $h(\nu|\mu)$ exists and 
    \begin{equation}\label{eq: rel entropy dens=pressure gap in gener sense}
    h(\nu|\mu)=P-\int_\Omega (I_\nu+\phi) d\nu.
    \end{equation}
In particular, if $(h(\nu), \int_{\Omega}\min\{\phi, 0\}d\nu)\neq (+\infty,-\infty)$, then
    
    \begin{equation}\label{eq: rel entropy density=pressure gap}
    h(\nu|\mu)
    =
    P-h(\nu)-\int_\Omega \phi d\nu.
\end{equation}
\end{prop}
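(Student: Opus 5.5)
The plan is to plug the Bowen--Gibbs bounds \eqref{eq: BG} into the finite-volume relative entropy and then pass to the limit, reading the resulting entropy-type term through the information function. For $a_0^{n-1}\in\Lt_n$ and any $\omega\in[a_0^{n-1}]$, \eqref{eq: BG} gives
\[
\Bigl|\log\mu([a_0^{n-1}]) - S_n\phi(\omega) + nP\Bigr|\le \log C .
\]
Writing $H(\nu_n|\mu_n)=\sum_{a_0^{n-1}}\nu([a_0^{n-1}])\log\nu([a_0^{n-1}])-\sum_{a_0^{n-1}}\nu([a_0^{n-1}])\log\mu([a_0^{n-1}])$ naively would produce $\infty-\infty$ when $h(\nu)=\infty$ and $\int\phi\,d\nu=-\infty$. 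So the two pieces should be kept together inside one integral against $\nu$:
\[
H(\nu_n|\mu_n)=\int_\Omega\Bigl(\log\nu([\omega_0^{n-1}])-S_n\phi(\omega)+nP\Bigr)\,\nu(d\omega)+O(\log C).
\]
This holds because the integrand differs pointwise from $\log\bigl(\nu([\omega_0^{n-1}])/\mu([\omega_0^{n-1}])\bigr)$ by at most $\log C$. That ratio has a $\nu$-integral in $[0,\infty]$, and its negative part is $\nu$-integrable, since $\int(\log(\nu/\mu))^-\,d\nu\le \sum \mu\le1$. Hence everything makes sense in $(-\infty,\infty]$.

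The next step is to decompose $-\log\nu([\omega_0^{n-1}])$ via the chain rule. We have
\[
-\log\nu([\omega_0^{n-1}])=\sum_{k=0}^{n-1} I^{(n-1-k)}_\nu(S^k\omega),
\]
where $I^{(m)}_\nu(\xi)=-\log\nu(\xi_0|\xi_1^{m})$ (with $I^{(0)}_\nu(\xi)=-\log\nu([\xi_0])$), using shift invariance of $\nu$ and reversing the order of conditioning. Combined with $S_n\phi=\sum_k\phi\circ S^k$, this gives
\[
\frac1n H(\nu_n|\mu_n)=P-\frac1n\sum_{m=0}^{n-1}\int_\Omega\bigl(I^{(m)}_\nu+\phi\bigr)\,d\nu+O\!\left(\frac{\log C}{n}\right).
\]
By Ces\`aro averaging, it then suffices to show $\int(I^{(m)}_\nu+\phi)\,d\nu\to\int(I_\nu+\phi)\,d\nu$ in $[-\infty,\infty)$, where $I_\nu=\lim_m I^{(m)}_\nu$ holds $\nu$-a.s.\ by martingale convergence.

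The main obstacle is this convergence in the countable-alphabet case, where $I^{(m)}_\nu$ and $\phi$ may fail to be integrable separately. The approach I would try first is a finite-volume identity. Since $-\log\nu(\omega_0|\omega_1^m)+\log\mu(\omega_0|\omega_1^m)$ is the pointwise log-likelihood ratio of the conditional laws, its $\nu$-integral is an averaged conditional relative entropy $\ge0$. Using \eqref{eq: BG} again, $-\log\mu(\omega_0|\omega_1^m)$ equals $-\phi(\omega)+P$ up to $\log C^2$ plus the H\"older tail $\sum_{j>m}\var_j\phi$. The $\log C^2$ term is, however, not summably small, so I would instead obtain a sharp bound by working with the Ruelle-operator eigendata: the BIP property gives $\mu=h\,m$ with $h$ bounded away from $0$ and $\infty$ and $-\log\mu(\omega_0|\omega_1^\infty)=P-\phi(\omega)-\log h(\omega)+\log h(S\omega)$, and the coboundary integrates to zero. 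Then $\int(I^{(m)}_\nu+\phi)\,d\nu=P-\int K_m\,d\nu+o(1)$ with $K_m\ge0$ a conditional relative entropy integrand, nondecreasing in $m$. The limit therefore exists by monotone convergence, identifies with $\int(I_\nu+\phi)\,d\nu$, and gives \eqref{eq: rel entropy dens=pressure gap in gener sense}, including the value $+\infty$.

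Finally, for \eqref{eq: rel entropy density=pressure gap}: when $(h(\nu),\int\min\{\phi,0\}\,d\nu)\neq(+\infty,-\infty)$, either $I_\nu$ is integrable or $\phi^-$ is integrable ($\phi$ being bounded above on $\Omega$ by \eqref{eq: BG}). In either case $\int(I_\nu+\phi)\,d\nu=\int I_\nu\,d\nu+\int\phi\,d\nu=h(\nu)+\int\phi\,d\nu$ is a legitimate splitting, using $\int I_\nu\,d\nu=h(\nu)$.
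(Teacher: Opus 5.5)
Your reduction matches the paper's: keep $\log\nu([\omega_0^{n-1}])-S_n\phi+nP$ under one integral, telescope $-\log\nu([\omega_0^{n-1}])$, and use Ces\`aro to reduce everything to $r_m:=\int(I^{(m)}_\nu+\phi)\,d\nu\to r_\infty:=\int(I_\nu+\phi)\,d\nu$ in $[-\infty,\infty)$. The gap is in how you prove this convergence.

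Take your $K_m=\log\bigl(\nu(\omega_0|\omega_1^m)/\mu(\omega_0|\omega_1^m)\bigr)$. Stationarity gives, when finite, $\int K_m\,d\nu=H(\nu_{m+1}|\mu_{m+1})-H(\nu_m|\mu_m)$. Such conditional relative entropies are not monotone in the conditioning. Here is a counterexample on $\{0,1\}^{\Z_+}$:
\begin{itemize}
\item let $\mu$ be the $g$-measure of $g(0y)=\tfrac34-2^{-r(y)-2}$, $g(1y)=1-g(0y)$, where $r(y)$ is the length of the initial run of $0$'s of $y$;
\item then $\log g$ is H\"older and $\mu$ is Bowen--Gibbs for $\log g$ with $P=0$;
\item take $\nu=\delta_{\bar 0}$; then $\int K_m\,d\nu=-\log\mu(0|0^m)=-\log E_\mu\bigl[g(0Y)\mid Y\in[0^m]\bigr]$ is strictly decreasing in $m$.
\end{itemize}
Moreover, $K_m$ is neither pointwise nonnegative nor pointwise monotone in $m$, so no monotone convergence theorem applies. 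The phrase ``identifies with $\int(I_\nu+\phi)\,d\nu$'' is exactly the content of the proposition when $h(\nu)=\infty$ and $\int\phi\,d\nu=-\infty$. Martingale convergence only gives $f_m:=I^{(m)}_\nu+\phi\to f_\infty$ $\nu$-a.s., and you still have to pass to the limit under the integral.

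You are missing the two estimates the paper proves:
\begin{itemize}
\item the lower bound $r_m\ge r_\infty$ for every $m$, obtained from $\int\frac{\nu(\omega_0|\omega_1^m)}{\nu(\omega_0|\omega_1^\infty)}\,d\nu=1$ together with $\log s\ge 1-\frac1s$;
\item a uniform domination $\sup_m f_m^+\in L^1(\nu)$, obtained from the stopping-time maximal inequality $\nu(\sup_m f_m>t)\le e^{-t}\sum_a e^{\sup_{[a]}\phi}\le Ce^{P-t}$; this gives $\limsup_m r_m\le r_\infty$ by reverse Fatou.
\end{itemize}

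If you prefer to keep your relative-entropy picture, it can be repaired by comparing $\nu$ with a different reference measure than $\mu$'s conditionals. Use the one-step extension $Q$ of $\nu$ by the normalized $g=e^{\phi+\log h-\log h\circ S-P}$: draw $\omega_1^\infty$ from $\nu$, then $\omega_0$ with probability $g(\omega_0\omega_1^\infty)$. Let $D_m$ be the relative entropy of $\nu$ with respect to $Q$ on $\F_{[0,m]}$. Then:
\begin{itemize}
\item $r_m=P-D_m+O(\var_{m+1}\log g)$;
\item $D_m$ is genuinely nondecreasing;
\item $D_m\uparrow D_\infty=P-r_\infty$ by continuity of relative entropy along $\F_{[0,m]}\uparrow\F$.
\end{itemize}

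Finally, the proposition does not assume BIP. Before invoking the Ruelle eigendata $\mu=h\,m$, you need \cite{s03} to obtain BIP, finite pressure, $P=P(\phi)$, and the identification of $\mu$ with the RPF measure.
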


\begin{proof}
Under the conditions of the above proposition, it follows from \cite{s03}*{Theorem 1} that the Markov shift $\Omega$ has the BIP property and the pressure of $\phi$ is finite and $P=P(\phi)$. 

Then by the Bowen-Gibbs property \eqref{eq: BG}, for all  $n\in\N$ and $\omega\in\Omega$,
\begin{equation}
    \Big \lvert\log \frac{\nu([\omega_0^{n-1}])}{\mu([\omega_0^{n-1}])}
    -
    nP+G_n(\omega)\Big\rvert
    \leq
    \log C,
\end{equation}
where $G_n(\omega)=-\log \nu([\omega_0^{n-1}]) + S_n\phi(\omega)$ which is finite for $\nu$- a.e. $\omega\in\Omega$.
Hence,
\begin{equation}\label{eq: H_n G_n 1st rel}
    H(\nu_n|\mu_n)
    =
    nP-\int_\Omega G_n d\nu
    +O(1).
\end{equation}
In order to write $G_n$ as an ergodic sum, for $0\leq k\leq \infty$, set
\begin{equation*}
    f_k(\omega)
    :=
    -\log\nu(\omega_0|\omega_1^k)
    +\phi(\omega),
\end{equation*}
which is well-defined and finite for $\nu$-a.e. $\omega\in\Omega$.
Note that $f_0(\omega)=-\log\nu([\omega_0])+\phi(\omega)$, and for the conditional probability, $\nu(\omega_0|\omega_1^k)
=
\frac{\nu([\omega_0^k])}{\nu([\omega_1^k])}$.
Then, using the telescoping,
\begin{equation*}\label{eq: telescoping}
    \nu([\omega_0^{n-1}])
    =
    \prod_{i=0}^{n-1}\frac{\nu([\omega_i^{n-1}])}{\nu([\omega_{i+1}^{n-1}])}
    =
    \prod_{i=0}^{n-1} \nu(\omega_i|\omega_{i+1}^{n-1}),
\end{equation*}
we obtain that
\begin{equation}\label{consequence of telescoping}
    G_n(\omega)
    =
    \sum_{i=0}^{n-1}
    f_{n-1-i}\circ S^{i}(\omega).
\end{equation}
By the martingale convergence theorem, we also have that
\begin{equation}
f_k(\omega)\xrightarrow[k\to\infty]{} f_\infty(\omega)
=
I_\nu(\omega)+\phi(\omega),
\text{ for } \nu \text{-a.e. } \omega\in\Omega.
\end{equation}
Now, we show that for all $k\in\Z_+\cup \{\infty\}$, $f_k$ is $\nu$-integrable, i.e., the integral $\int_\Omega f_k d\nu$ of $f_k$ exists as an extended real number.

We first note that $\phi$ is bounded above.
In fact, the Bowen-Gibbs condition \eqref{eq: BG} with $n=1$ yields that for every $a\in \Lt_1$, $q_a:=e^{\sup_{[a_0]}\phi}\leq Ce^{P}\mu{([a_0])}$, in particular, $\sum_{a\in \Lt_1} q_a \leq Ce^P$.
Furthermore, by the definition of $q_a$, 
$
    \phi(\omega)\leq \log q_{\omega_0},
$
for every $\omega\in\Omega$,
which yields that
\begin{equation}\label{eq: 1st bound on f_k}
    f_k(\omega)\leq -\log \nu(\omega_0|\omega_1^k)
    +
    \log q_{\omega_0}, \;\; \omega\in\Omega.
\end{equation}
Fix $a\in \Lt_1\subset E$ and $t>0$, and consider the random variable
$M_k^a(\omega):=\nu([a_0]|\omega_1^k)$, and define a hitting time by
\begin{equation*}
    \st_{a,t}(\omega):=\inf\{k\in\Z_+: M^a_k(\omega)< q_ae^{-t}\}, \;\; \omega\in\Omega.
\end{equation*}
Then since $\{\omega: \st_{a,t}=k\} \in \F_{[1,k]}$ for every $k\geq 0$,
$$
\nu\Big (X_0=a, \st_{a,t}=k|\F_{[1,k]}\Big)
=
\mathds{1}_{\{\st_{a,t}=k\}} \cdot
\nu\Big (X_0=a|\F_{[1,k]}\Big)
=
\mathds{1}_{\{\st_{a,t}=k\}} 
M_k^a,
$$
hence
\begin{equation*}
\nu\Big (X_0=a, \st_{a,t}=k\Big)
=
\int_\Omega\mathds{1}_{\{\st_{a,t}=k\}} 
M_k^a \; d\nu
\leq
q_a e^{-t}\cdot
\nu(\{\st_{a,t}=k\}).
\end{equation*}
Consequently,
\begin{equation}\label{eq: bound for nu(a_0, st)}
\nu\Big (X_0=a, \; \st_{a,t}<\infty\Big)
\leq
q_a e^{-t}\cdot
\sum_k\nu(\{ \st_{a,t}=k\})
\leq
q_a e^{-t}.
\end{equation}
Now consider a configuration $a_0\omega_1^{\infty}$ in $[a_0]\cap \{\sup_k f_k>t\}$, then by \eqref{eq: 1st bound on f_k}, for a large $k$, 
\begin{equation*}
    t<f_k(a_0\omega_1^{\infty})
    \leq
    -\log M_k^a(\omega) + \log q_a
    \;\;\; \Longrightarrow 
    \;\;\;
    M_k^a(\omega)< q_a e^{-t}
    \;\;\; \Longrightarrow 
    \;\;\;
    \st_{a,t}<\infty.
\end{equation*}
By \eqref{eq: bound for nu(a_0, st)}, $\nu([a_0]\cap \{\sup_k f_k>t\})\leq \nu\Big (X_0=a, \; \st_{a,t}<\infty\Big)
\leq
q_a e^{-t}$, which implies
\begin{equation}\label{eq: bound before showing integr of f_k}
    \nu\Big (\{\sup_k f_k>t\}\Big)
    =
    \sum_{a}
    \nu([a_0]\cap \{\sup_k f_k>t\})
    \leq
    e^{-t}\sum_aq_a.
\end{equation}
Hence as $(\sup_k f_k)^+=\sup_k f_k^+$, \eqref{eq: bound before showing integr of f_k} yields
\begin{equation}\label{eq: bound for integrals of f_k^+}
    \int_\Omega \sup_k f_k^+\;  d\nu
    \leq
    \sum_aq_a \int_0^\infty e^{-t} dt
    =
    \sum_aq_a
    \leq
    C e^P.
\end{equation}
Since $f_k$ converges to $f_\infty$ $\nu$-a.s. as $k\to\infty$,
we also conclude 
$
\int_\Omega f_\infty^+ d\nu 
    \leq
    C e^P.
$
\\
For $k\in\Z_+\cup \{\infty\}$, set $r_k:=\int_\Omega f_k d\nu\in [-\infty, \infty)$.
Note that as $\nu$ is shift-invariant, by inserting \eqref{consequence of telescoping} into \eqref{eq: H_n G_n 1st rel}, we have
\begin{equation}
    H(\nu_n|\mu_n)
    =
    nP-\sum_{k=0}^{n-1} r_k
    +O(1).
\end{equation}
Therefore, if we can show that $r_k$ converges to $r_\infty=\int_\Omega f_\infty d\nu=\int_\Omega(I_\nu+\phi)d\nu$ as $k\to\infty$, then the Stolz-Cesàro theorem yields the claim of the proposition, i.e.,
\begin{equation*}
    \lim_{n\to\infty} \frac{1}{n}H(\nu_n|\mu_n)
    =
    P-\lim_{n\to\infty}\frac{1}{n}\sum_{k=0}^{n-1} r_k
    =
    P-r_\infty.
\end{equation*}
To prove the convergence of the sequence, we first show that
$\limsup_{k\to\infty} r_k\leq r_\infty$.
Note that for every $k\in\Z_+$, $f_k\leq F:=\sup_k f_k^+$, and by \eqref{eq: bound before showing integr of f_k}, $F\in L^1(\Omega,\nu)$. 
Then as $\lim_{k\to\infty }f_k=f_\infty$ $\nu$-a.e., by employing the reverse Fatou theorem, we conclude $\limsup_{k\to\infty} r_k\leq r_\infty$. 

\noindent
Next, we show for every $k\in\Z_+$ that $r_k\geq r_\infty$. 
By the monotone convergence theorem,
\begin{equation}\label{eq: decompos of ratio of cond prob}
    \int_\Omega \frac{\nu(\omega_0|\omega_1^k)}{\nu(\omega_0|\omega_1^{\infty})}\nu(d\omega)
    =
    \sum_{a\in\Lt_1}
    \int_\Omega 
    \mathds{1}_{[a_0]}(\omega)\frac{\nu(a_0|\omega_1^k)}{\nu(a_0|\omega_1^{\infty})}\nu(d\omega).
\end{equation}
Since $\omega\mapsto\frac{M_k^a(\omega)}{M_\infty^a(\omega)}= \frac{\nu(a_0|\omega_1^k)}{\nu(a_0|\omega_1^{\infty})}$ 
is $\F_{[1,\infty)}$ measurable, we have
$$
\nu\Big(\mathds{1}_{[a_0]}\frac{M_k^a}{M_\infty^a}\Big | \F_{[1,\infty)} \Big)
=
\frac{M_k^a}{M_\infty^a}\cdot 
\nu\Big(\mathds{1}_{[a_0]}\Big | \F_{[1,\infty)} \Big)
=
M_k^a, \;\; \text{ for } \nu-\text{a.e.}.
$$
Hence, by \eqref{eq: decompos of ratio of cond prob},
\begin{equation}\label{eq: integral of ratio of cond probab}
    \int_\Omega \frac{\nu(\omega_0|\omega_1^k)}{\nu(\omega_0|\omega_1^{\infty})}\nu(d\omega)
    =
    \sum_{a\in\Lt_1}
    \int_\Omega 
    \nu(a_0|\omega_1^k)\nu(d\omega)
    =
    \int_\Omega 
    \sum_{a\in\Lt_1}
    \nu(a_0|\omega_1^k)\nu(d\omega)
    =1.
\end{equation}
Consider $\ell_k(\omega):=f_k(\omega)-f_\infty(\omega)=\log \frac{\nu(\omega_0|\omega_1^\infty)}{\nu(\omega_0| \omega_1^k)}$ defined for $\nu$-a.e. $\omega$.
Then by the elementary inequality $1-\frac{1}{s}\leq \log s$, which holds for all $s\in [0,\infty]$, it follows that for $\nu$-almost all $\omega$, $\ell_k(\omega)\geq \mathfrak{u}_k(\omega):= 1-\frac{\nu(\omega_0|\omega_1^k)}{\nu(\omega_0| \omega_1^\infty)}$.
Hence for the negative part $\ell_k^-$ of $\ell_k$, one has 
$
\ell^-_k(\omega)\leq \mathfrak{u}_k^-(\omega)
=
\Big( \frac{\nu(\omega_0|\omega_1^k)}{\nu(\omega_0| \omega_1^\infty)}-1\Big)^+
\leq
\frac{\nu(\omega_0|\omega_1^k)}{\nu(\omega_0| \omega_1^\infty)}.
$
Thus by \eqref{eq: integral of ratio of cond probab}, $\ell_k^-\in L^1(\Omega,\nu)$, which implies that $\ell_k$ is integrable.
Furthermore, for the integral of $\ell_k$, \eqref{eq: integral of ratio of cond probab} also implies that 
$
\int_\Omega \ell_k d\nu
\geq
\int_\Omega \mathfrak{u}_k d\nu
=
0$.
By the definition of $\ell_k$, $f_\infty=f_k-\ell_k$, and since $f_k^+, (-\ell_k)^+\in L^1(\Omega,\nu)$, the integral $\int_\Omega (f_k-\ell_k)d\nu$ can be split.
Therefore, by integrating, we get that $r_\infty=r_k-\int_\Omega \ell_k d\nu$, thus since $\int_\Omega \ell_k d\nu\geq 0$, we conclude the desired claim.

Note that if $(h(\nu), \int_{\Omega}\min\{\phi, 0\}d\nu)\neq (+\infty,-\infty)$, then the two sums in the expression 
$$
H(\nu_n|\mu_n)
=
\sum_{a_0^{n-1}\in\Lt_n} \nu([a_0^{n-1}])\log\nu ([a_0^{n-1}])
-
\sum_{a_0^{n-1}\in\Lt_n} \nu([a_0^{n-1}])\log\mu ([a_0^{n-1}])
$$
can be treated separately, and then \eqref{eq: rel entropy density=pressure gap} can be established using the Bowen-Gibbs property \eqref{eq: BG}.
\end{proof}

\subsection{A quantitative relation between the Wasserstein distance \texorpdfstring{$W_{1,d_\theta}$}{W1,dtheta} and \texorpdfstring{$\bar d$}{bard}}
In this subsection, we prove the following inequality.
\begin{prop}\label{prop: Was dist vs bar d dist}
    For all shift-invariant probability measures $\nu,\mu\in\M_{1,S}(\Omega)$, one has 
    \begin{equation}\label{Was dist vs bar d dist}
    W_{1,d_\theta}(\nu,\mu)\leq \frac{1}{1-\theta} \bar d(\nu,\mu).
\end{equation}
Here the constant $\frac{1}{1-\theta}$ is optimal. 
\end{prop}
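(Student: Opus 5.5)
We bound the $d_\theta$ cost of a shift-invariant joining by the Hamming discrepancies of its coordinates, then optimise over joinings using Proposition \ref{prop: bar d is defined in infinite volume}.

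\emph{Pointwise bound.} For $\xi,\eta\in\Omega$, let $m=\min\{i:\xi_i\neq\eta_i\}$ be the first disagreement. If $\xi\neq\eta$, then $\rho(\xi_m,\eta_m)=1$, so
\begin{equation*}
d_\theta(\xi,\eta)=\theta^m\le \sum_{i=0}^\infty \theta^i\rho(\xi_i,\eta_i).
\end{equation*}
The same inequality holds trivially when $\xi=\eta$.

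\emph{Integration against a joining.} Let $\Pi\in\cpl_S(\nu,\mu)$ be any joining. Integrate the pointwise bound against $\Pi$ and use monotone convergence to exchange sum and integral. Shift-invariance of $\Pi$ gives
\begin{equation*}
\int\rho(\xi_i,\eta_i)\,d\Pi=\int\rho(\xi_0,\eta_0)\,d\Pi \quad\text{for every } i.
\end{equation*}
Hence
\begin{equation*}
W_{1,d_\theta}(\nu,\mu)\le\int d_\theta\,d\Pi\le \frac{1}{1-\theta}\int\rho(\xi_0,\eta_0)\,d\Pi.
\end{equation*}
Here the first inequality holds because a joining is in particular a coupling.

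\emph{Infimum over joinings.} Take the infimum over $\Pi\in\cpl_S(\nu,\mu)$. By the joining characterization \eqref{eq: equiv of Hamming distance and joining def of bar d} in Proposition \ref{prop: bar d is defined in infinite volume}, this infimum is $\bar d(\nu,\mu)$, which gives \eqref{Was dist vs bar d dist}.

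\emph{Optimality of the constant.} We need measures for which the ratio $W_{1,d_\theta}/\bar d$ approaches $1/(1-\theta)$. Equality in the pointwise bound requires the pair to agree up to some time $m$ and then disagree at \emph{every} subsequent coordinate. The plan is to take $\nu=\delta_{x}$ and $\mu=\delta_{y}$ for two fixed points $x=aaa\dots$ and $y=bbb\dots$, if loops at $a$ and $b$ are admissible. Then the only coupling is the product, so
\begin{equation*}
W_{1,d_\theta}(\nu,\mu)=d_\theta(x,y)=\theta^0=1,\qquad \bar d(\nu,\mu)=1,
\end{equation*}
and the ratio is only $1$. So point masses do not suffice, and the example must spread the first disagreement time.

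The candidate is a family of invariant measures in which typical pairs under the optimal coupling agree on long blocks and then disagree forever, with a mismatch density $\epsilon\to0$. A natural attempt is to let $\nu,\mu$ be small perturbations of a common measure, with rare switches from agreement to permanent disagreement, and to check that the optimal joining cannot do better. Computing $W_{1,d_\theta}$ of order $\epsilon/(1-\theta)$, together with a matching lower bound, is the step I expect to be the main obstacle. A lower bound on $W_{1,d_\theta}$ via Kantorovich--Rubinstein duality requires exhibiting a $1$-Lipschitz test function $f$ with
\begin{equation*}
\int f\,d\nu-\int f\,d\mu\approx \frac{\bar d(\nu,\mu)}{1-\theta}.
\end{equation*}
I would try $f(\omega)=\sum_i\theta^i g(\omega_i)$ for a suitable indicator-type $g$; this $f$ is $1$-Lipschitz for $d_\theta$ when $g$ takes values in $[0,1-\theta]$ appropriately scaled. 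Then
\begin{equation*}
\int f\,d\nu-\int f\,d\mu=\frac{1}{1-\theta}\Big(\int g\,d\nu_1-\int g\,d\mu_1\Big).
\end{equation*}
So it suffices to pick measures whose one-marginals differ by $\bar d$ in total variation, with $\bar d$ attained by marginal-wise coupling. I would also need to recheck the Lipschitz normalisation of $f$ carefully.
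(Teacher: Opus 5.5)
Your proof of the inequality is correct and follows the paper's argument. The paper's $d_T(\xi,\eta)=\sum_{i\ge0}\theta^{i+1}\rho(\xi_i,\eta_i)$, with $\theta d_\theta\le d_T$, is your pointwise bound. The paper then uses the same $S\times S$-invariance step and takes the infimum over $\cpl_S(\nu,\mu)$ via Proposition \ref{prop: bar d is defined in infinite volume}.

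The gap is the optimality claim. You leave it as a plan, and the one concrete tool you propose cannot deliver it. Your normalisation is in fact right: $g$ with values in $[0,1-\theta]$ makes $f=\sum_i\theta^i g(\omega_i)$ $1$-Lipschitz for $d_\theta$. But then
\[
\int f\,d\nu-\int f\,d\mu=\frac{1}{1-\theta}\Big(\int g\,d\nu_1-\int g\,d\mu_1\Big)\le\frac{1-\theta}{1-\theta}\,\|\nu_1-\mu_1\|_{\mathrm{TV}}\le\bar d(\nu,\mu).
\]
The factor $\frac{1}{1-\theta}$ is exactly cancelled by the normalisation. So test functions of this additive form can never certify a ratio $W_{1,d_\theta}/\bar d$ larger than $1$.

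The missing idea is to let one of the two measures be a point mass. Then the coupling is unique, and $W_{1,d_\theta}$ can be computed exactly with no duality argument. You discarded point masses after pairing two of them; pair one with a Bernoulli measure instead. On $\Omega=\{0,1\}^{\Z_+}$ take $\mu=\delta_{\bar 0}$ and $\nu_p=(p\delta_0+(1-p)\delta_1)^{\Z_+}$. Then $\cpl(\delta_{\bar 0},\nu_p)=\{\delta_{\bar 0}\times\nu_p\}$, which is also the only joining. The first $1$ of $\eta$ occurs at time $k$ with probability $p^k(1-p)$, so
\[
W_{1,d_\theta}(\delta_{\bar 0},\nu_p)=\sum_{k\ge0}\theta^k p^k(1-p)=\frac{1-p}{1-\theta p},\qquad \bar d(\delta_{\bar 0},\nu_p)=\nu_p(\eta_0=1)=1-p .
\]
The ratio $\frac{1}{1-\theta p}$ increases to $\frac{1}{1-\theta}$ as $p\to1$, so no smaller constant works.

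In this example the pairs do \emph{not} disagree at every coordinate after the first mismatch. What drives near-equality is only that mismatches are rare, so the $d_\theta$-cost is governed by the first mismatch time. Your heuristic about pointwise equality was pointing in the wrong direction.
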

\begin{rmk}
    The reverse of the relationship \eqref{Was dist vs bar d dist} can not be true because $\bar d$ generates a topology strictly finer than the weak$^*$ topology.
\end{rmk}

\begin{proof}
Let us introduce another metric on $\Omega$ by
$
d_T(\xi,\eta):=\sum_{i=0}^{\infty} \theta^{i+1}\rho(\xi_i,\eta_i)
$ 
$\xi, \eta\in\Omega$.
Then for every $\xi, \eta\in\Omega$, it is easy to check that
\begin{equation}\label{eq: d_theta vs d_T}
    \theta d_{\theta}(\xi,\eta)
    \leq
    d_T(\xi,\eta)
    \leq
    \frac{\theta}{1-\theta} d_\theta(\xi,\eta).
\end{equation}
Fix shift-invariant probability measures $\mu,\nu\in\M_{1,S}(\Omega)$, and consider a joining $\Pi\in\cpl_S(\mu,\nu)$.
Then
\begin{eqnarray}\label{eq: a tr-inv trick in metric comparison}
    \int_{\Omega\times\Omega} d_T(\xi,\eta)\Pi(d\xi,d\eta)
    &=&\notag
    \sum_{i=0}^{\infty}\theta^{i+1} \int_{\Omega\times\Omega} \rho(\xi_i,\eta_i) \Pi(d\xi,d\eta)\\
    &=&\notag
    \sum_{i=0}^{\infty}\theta^{i+1} \int_{\Omega\times\Omega} \rho(\xi_0,\eta_0) \Pi(d\xi,d\eta)\\
    &=&
    \frac{\theta}{1-\theta}
    \int_{\Omega\times\Omega} \rho(\xi_0,\eta_0) \Pi(d\xi,d\eta)
\end{eqnarray}
in the second equation above, we used the invariance of the coupling $\Pi$ under the map $S\times S$.
Then as $\cpl_S(\mu,\nu)\subset \cpl(\mu,\nu)$, we can conclude from Proposition \ref{prop: bar d is defined in infinite volume} and \eqref{eq: a tr-inv trick in metric comparison} that
\begin{multline*}
    W_{1,d_T}(\mu,\nu)
    =
    \inf_{\Pi\in\cpl(\mu,\nu)}\int_{\Omega\times\Omega} d_T(\xi,\eta)\Pi(d\xi,d\eta)
    \\ \label{eq: W_{1,d_T} Vs bar d}
    \leq
    \frac{\theta}{1-\theta}
    \inf_{\Pi\in\cpl_S(\mu,\nu)}
    \int_{\Omega\times\Omega} \rho(\xi_0,\eta_0) \Pi(d\xi,d\eta)
    =
    \frac{\theta}{1-\theta} \bar d(\mu,\nu).
\end{multline*}
Thus, by \eqref{eq: d_theta vs d_T}, 
the statement of the proposition follows.

For optimality of the multiplicative constant, consider $\Omega:=\{0,1\}^{\Z_+}$ and let $\mu=\delta_{\bar 0}$, and $\nu_p=(p\delta_0+(1-p)\delta_{1})^{\Z_+}$ for $p\in(0,1)$, where $\bar 0$ is all $0$ configuration.
Then since $\cpl(\delta_{\bar 0},\nu_p)=\{\delta_{\bar 0}\times\nu_p\}$, one has $W_{1,d_\theta}(\delta_{\bar 0},\nu_p)=\frac{1-p}{1-\theta p}$ and $\bar d(\delta_{\bar 0},\nu_p)=1-p$, which yields 
$\sup_{p\in(0,1)}\frac{W_{1,d_\theta}(\delta_{\bar 0},\nu_p)}{\bar d(\delta_{\bar 0},\nu_p)}=\frac{1}{1-\theta}$.

\end{proof}

\begin{cor}
In light of \eqref{Was dist vs bar d dist}, provided  \eqref{eq: rel entropy dens=pressure gap in gener sense}, which is true in the setups where either the transport-entropy inequality or the effective intrinsic ergodicity is established,  the transport-entropy inequality \eqref{eq: tei} immediately yields the EKP inequality \eqref{eq: ekp in W_{1,d_theta} norm} with the constant $\frac{1}{1-\theta} C_{\tei}$, i.e., \begin{equation}
    W_{1,d_\theta}(\nu,\mu_\phi)
    \leq
    \frac{C_{\tei}}{1-\theta} \sqrt{P(\phi)-h(\nu)-\int_\Omega \phi d\nu}, \;\; \; \nu\in\M_{1,S}(\Omega).
\end{equation} 
\end{cor}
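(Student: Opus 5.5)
The corollary follows by chaining three facts already in the paper: Proposition \ref{prop: Was dist vs bar d dist}, the transport-entropy inequality \eqref{eq: tei} for $\mu_\phi$, and the pressure-defect identity \eqref{eq: rel entropy dens=pressure gap in gener sense} from Proposition \ref{prop: rel ent den=press gap}.

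Fix $\nu\in\M_{1,S}(\Omega)$. First, Proposition \ref{prop: Was dist vs bar d dist} applies to any pair of shift-invariant measures, so
\begin{equation*}
W_{1,d_\theta}(\nu,\mu_\phi)\leq \frac{1}{1-\theta}\,\bar d(\nu,\mu_\phi).
\end{equation*}
Second, the hypothesis that $\mu_\phi$ satisfies \eqref{eq: tei} with constant $C_{\tei}$ bounds the right-hand side by $\frac{C_{\tei}}{1-\theta}\sqrt{h(\nu|\mu_\phi)}$. Part of that hypothesis is that $h(\nu|\mu_\phi)$ exists, possibly as $+\infty$. Third, I would substitute $h(\nu|\mu_\phi)=P(\phi)-\int_\Omega(I_\nu+\phi)\,d\nu$.

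In the setting of interest this identity needs justification. There $\mu_\phi$ is the equilibrium state of a Hölder potential with finite pressure on a topologically mixing BIP shift, and it is Bowen--Gibbs by Sarig's results. So Proposition \ref{prop: rel ent den=press gap} applies with $P=P(\phi)$. The corollary explicitly takes \eqref{eq: rel entropy dens=pressure gap in gener sense} as given, so nothing more is needed beyond this check.

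The only remaining task is to reconcile the displayed form $P(\phi)-h(\nu)-\int_\Omega\phi\,d\nu$ with $P(\phi)-\int_\Omega(I_\nu+\phi)\,d\nu$. I would handle two cases:
\begin{itemize}
\item If $(h(\nu),\int_\Omega\min\{\phi,0\}\,d\nu)\neq(+\infty,-\infty)$, the identity \eqref{eq: rel entropy density=pressure gap} gives the separated form directly.
\item Otherwise, the expression is read under the convention stated in the introduction after \eqref{eq: ekp in its original form}, namely as $\int_\Omega(I_\nu+\phi)\,d\nu$.
\end{itemize}
If the pressure defect is $+\infty$, the inequality is trivial.

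The main point needing care is this interpretive step about the $\infty-\infty$ convention rather than any estimate. Finally, because the bound is on $W_{1,d_\theta}$, Kantorovich--Rubinstein duality also yields the test-function version \eqref{eq: ekp in its original form}, with constant $\frac{C_{\tei}}{1-\theta}$ times $|f|_\theta\le\|f\|_\theta$.
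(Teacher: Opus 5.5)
Your proposal is correct and follows the argument the paper intends; the corollary is stated without a separate proof. You chain Proposition~\ref{prop: Was dist vs bar d dist}, the TEI \eqref{eq: tei}, and the identity \eqref{eq: rel entropy dens=pressure gap in gener sense}, reading the $\infty-\infty$ case via the paper's convention and noting that an infinite defect is trivial. One caveat on your closing Kantorovich--Rubinstein remark, which lies outside the statement: it holds if $|f|_\theta$ means the $d_\theta$-Lipschitz seminorm as in the introduction, but with the definition $\sup_{n\geq 1}\var_n f/\theta^n$ from Section~\ref{preliminaries} the pairs with $\xi_0\neq\eta_0$ (where $d_\theta=1$) are not controlled, so the Lipschitz constant is only bounded by $\max\{|f|_\theta,\sup f-\inf f\}\leq 2\|f\|_\theta$.
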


\section{Transport-entropy inequality for countable Markov shifts and its applications}\label{Section: TEI on CMS}
\subsection{Transport-entropy inequality for countable Markov shifts}
In this subsection, we aim to extend the transport-entropy inequality proved in \cite{CMU19}*{Theorem 3.9} to the setup of countable Markov shifts.
We in fact show the following.
\begin{thm}\label{thm: my tei on CMS}
Suppose $(\Omega,S)$ is a topologically mixing countable Markov shift satisfying the BIP property.
Let $\phi:\Omega\to\R$ be a H\"older potential with finite pressure, and $\mu$ be its unique Bowen-Gibbs measure.
Then there exists a constant $C_{\tei}=C_\tei(\phi)>0$ such that for all $n\geq 1$ and for every shift-invariant measure $\nu\in\M_{1,S}(\Omega)$, one has
\begin{equation}\label{eq: fin volume tei}
    W_{1,d^{(n)}}(\nu_{n},\mu_n)
    \leq
    C_{\tei} \sqrt{\frac{1}{n}H(\nu_n|\mu_n)}.
\end{equation}
In particular,
\begin{equation}\label{eq: tei in infty volume}
    \bar d(\nu,\mu)
    \leq
    C_{\tei} \sqrt{h(\nu|\mu)}.
\end{equation}

\end{thm}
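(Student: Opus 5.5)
The plan is to combine the Gaussian concentration bound of Theorem \ref{eq: GCB for Markov shifts} with the Bobkov--G\"otze duality \cite{BG99}*{Theorem 3.1}. That duality says that, on a metric space $(X,d)$, a probability measure $m$ satisfies $W_{1,d}(\nu',m)\leq\sqrt{2\sigma^2 H(\nu'|m)}$ for all $\nu'\ll m$ if and only if $\int e^{\lambda g}\,dm\leq e^{\lambda\int g\,dm+\lambda^2\sigma^2/2}$ for all $1$-Lipschitz $g$ and all $\lambda\in\R$.

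\textbf{Step 1 (setting).} Fix $n$ and work on the finite-volume space $\Omega_n\cong\Lt_n$ with the Hamming metric $d^{(n)}$ and the measure $\mu_n$. Take any $g:\Omega_n\to\R$ that is $1$-Lipschitz for $d^{(n)}$. Then $g$ changes by at most $1/n$ when one coordinate changes.

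\textbf{Step 2 (lifting $g$).} Define $K:\Omega^n\to\R$ by
$$K(x_0,\dots,x_{n-1}):=\lambda\, g\big((x_0)_0,(x_1)_0,\dots,(x_{n-1})_0\big).$$
Here $K$ is extended arbitrarily (for example by a McShane extension in the Hamming metric) to non-admissible words. Then $K(x,Sx,\dots,S^{n-1}x)=\lambda g(x_0^{n-1})$.

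If $x_i$ and $x_i'$ differ, then $d_\theta(x_i,x_i')\geq\theta^{m}$, where $m$ is the first index of disagreement. If $(x_i)_0\neq(x_i')_0$, then $d_\theta(x_i,x_i')=1$ with the convention $d_\theta=\theta^{0}$, and the change in $K$ is at most $|\lambda|/n$. If $(x_i)_0=(x_i')_0$, the change is $0$. Hence $\lip_{\theta,i}(K)\leq|\lambda|/n$.

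\textbf{Step 3 (sub-Gaussian bound).} Theorem \ref{eq: GCB for Markov shifts} now gives
$$\int e^{\lambda g}\,d\mu_n\leq e^{\lambda\int g\,d\mu_n}\,e^{D\lambda^2/n}.$$
This is the sub-Gaussian hypothesis of Bobkov--G\"otze with $\sigma^2=2D/n$. The Hamming space is a bounded separable metric space, so the theorem applies. It yields, for every $\nu'\ll\mu_n$,
$$W_{1,d^{(n)}}(\nu',\mu_n)\leq\sqrt{(4D/n)\,H(\nu'|\mu_n)}.$$
If $\nu_n\not\ll\mu_n$, then $H(\nu_n|\mu_n)=\infty$ and there is nothing to prove. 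Taking $\nu'=\nu_n$ gives \eqref{eq: fin volume tei} with $C_\tei=2\sqrt D$. Note that $\mu_n$ charges every admissible word by the Bowen--Gibbs property, so absolute continuity is the generic case.

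\textbf{Step 4 (infinite volume).} Let $n\to\infty$ in \eqref{eq: fin volume tei}. The left side converges to $\bar d(\nu,\mu)$ by Proposition \ref{prop: bar d is defined in infinite volume}. The right side converges to $C_\tei\sqrt{h(\nu|\mu)}$ because the limit defining $h(\nu|\mu)$ exists by Proposition \ref{prop: rel ent den=press gap}. That proposition applies because $\mu$ is Bowen--Gibbs for the H\"older $\phi$. If $h(\nu|\mu)=\infty$, the bound is trivial.

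\textbf{Main obstacle.} The step I expect to need the most care is the Bobkov--G\"otze application. The point is that the GCB constant must scale exactly as $\sum_i\lip_{\theta,i}(K)^2\leq\lambda^2/n$; this scaling is what produces the $1/n$ normalisation of the relative entropy. One also has to make sure the duality is used in the direction "exponential integrability for all Lipschitz functions implies transport-entropy." This direction only needs the Donsker--Varadhan variational formula and Kantorovich--Rubinstein duality on the discrete countable space $\Omega_n$, and both hold there without compactness.

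A minor technical point is that Theorem \ref{eq: GCB for Markov shifts} requires $K$ to be defined on all of $\Omega^n$. The choice of $K$ above, which depends only on zeroth coordinates, handles this, since for any $x_i$ the symbol $(x_i)_0$ lies in $E$ and $g$ is extended to all of $E^n$ with the same Hamming Lipschitz constant.
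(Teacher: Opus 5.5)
Your proposal is correct and follows the paper's proof essentially step for step. You lift a Hamming-Lipschitz function on $\Omega_n$ to $E^n$ by a McShane extension and then to a function of the zeroth coordinates on $\Omega^n$; you apply the GCB of Theorem~\ref{eq: GCB for Markov shifts} to get the sub-Gaussian bound with proxy $D\lambda^2/n$; Bobkov--G\"otze then yields $C_\tei=2\sqrt{D}$, and you pass to the limit using Propositions~\ref{prop: bar d is defined in infinite volume} and~\ref{prop: rel ent den=press gap}. One wording fix: ``extended arbitrarily'' in Step~2 should read ``extended with the same Hamming Lipschitz constant,'' which is what your final paragraph correctly uses.
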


\begin{proof}
The theorem follows from the Gaussian concentration bound in Theorem \ref{eq: GCB for Markov shifts} and the Bobkov-G\ou tze theorem (Theorem 3.1 in \cite{BG99}).
In fact, Theorem 3.1 in \cite{BG99} states that the finite volume transport-entropy inequality \eqref{eq: fin volume tei} is equivalent to the finite volume GCB; namely, the inequality
\begin{equation}
    W_{1,d^{(n)}}(\nu_n,\mu_n)\leq \sqrt{2C H(\nu_n|\mu_n)}
\end{equation}
holds for any probability measure $\nu_n\in\M_1(\Omega_n)$, which is absolutely continuous with respect to $\mu_n$, if and only if, for every $d^{(n)}$-Lipschitz function $f:\Omega_n\to\R$ and for all $t\in\R$,
\begin{equation}
    \int_{\Omega_n}e^{tf}d\mu_n
    \leq
    \exp{\Big(\int_{\Omega_n}tf\; d\mu_n\Big)}
    \exp{\Big(\frac{{C|f|_{d^{(n)}}^2t^2}}{2}\Big)},
\end{equation}
where $|f|_{d^{(n)}}$ is the Lipschitz constant of $f$.

Fix Lipschitz function $f:\Omega_n\to\R$ with Lipschitz constant $L=|f|_{d^{(n)}}$. 
Consider an extension $\tilde{f}:E^n\to\R$ by $\tilde f(\sigma_0^{n-1}):=\inf_{\xi_0^{n-1}\in\Omega_n}\{f(\xi_0^{n-1})+L\; d^{(n)}(\sigma_0^{n-1}, \xi_0^{n-1})\}$, where the Hamming distance $d^{(n)}$ is given on $E^n$.
One can easily check that $\tilde f|_{\Omega_n}=f$ and $\tilde f$ is also Lipschitz with constant $|\tilde f|_{d^{(n)}}=L$, thus $|\tilde f|_{d^{(n)}}=|f|_{d^{(n)}}$.
Extend $\tilde f$ to $\Omega^n$ as follows
$$
F(\omega^{(0)},\dots,\omega^{(n-1)})
:=
\tilde f(\omega^{(0)}_0,\dots,\omega^{(n-1)}_0),\;\; (\omega^{(0)},\dots,\omega^{(n-1)})\in\Omega^n.
$$
We define the oscillation of $\tilde f:E^n\to\R$ at a coordinate $i=\overline{0,n-1}$ by $\partial_i\tilde f:=\sup\{\tilde f(\xi_0^{n-1})-\tilde f(\eta_0^{n-1}): \xi_j=\eta_j \text{ for all } j\neq i\}$.
Then since for all $\xi_0^{n-1}, \eta_0^{n-1}\in E^n$,
$$
|\tilde f(\xi_0^{n-1})-\tilde f(\eta_0^{n-1})|
\leq
\sum_{i=0}^{n-1}\rho(\xi_i,\eta_i) \partial_i \tilde f,
$$
one can easily check that 
\begin{equation}\label{eq: rel between lips constant and oscillations}
    |\tilde f|_{d^{(n)}}=n\sup_{0\leq i\leq n-1} \partial_i \tilde f.
\end{equation}
Furthermore, it can be readily verified that for every $0\leq i\leq n-1$, $\lip_{\theta,i}(F)\leq \partial_i \tilde f.$
For any smooth function $\mathfrak{g}:\R\to\R$, the following equalities hold
\begin{eqnarray}
    \int_\Omega \mathfrak{g}\Big(F(\omega,S\omega,\dots S^{n-1}\omega)\Big)\mu(d\omega)
    &=&\notag 
    \int_\Omega \mathfrak{g}\Big(\tilde f(\omega_0,\omega_1,\dots, \omega_{n-1})\Big) \mu(d\omega)\\
    &=&\notag
    \int_\Omega \mathfrak{g}\Big(f(\omega_0,\omega_1,\dots, \omega_{n-1})\Big) \mu(d\omega)\\
    &=&\label{eq: imp extension trick}
    \int_{\Omega_n} \mathfrak{g}\circ f d\mu_n,
\end{eqnarray}
where the first and the last equalities follow from the definitions of $F$ and $\mu_n$, and the second one follows from the fact that $\tilde f|_{\Omega_n}=f$ and $\omega$ runs within $\Omega$ not in $E^{\Z_+}$.

\noindent
Since $F$ is separately $d_\theta$-Lipschitz, by applying Theorem \eqref{eq: GCB for Markov shifts} to $F$, we obtain for every $t\in\R$ that
\begin{eqnarray*}
    \int_{\Omega_n} e^{tf}d\mu_n
    &\overset{\eqref{eq: imp extension trick}}{=}&
    \int_\Omega e^{tF(\omega, S\omega,\dots, S^{n-1}\omega) } \mu(d\omega)
    \\
    &\overset{\eqref{eq: global GCB def}}{\leq}&
     \exp{\Big(\int_\Omega tF(\omega, S\omega,\dots, S^{n-1}\omega) \mu(d\omega)\Big) }
    e^{Dt^2\sum_{i=0}^{n-1}\lip_{\theta,i}(F)^2}
    \\
    &\overset{\eqref{eq: imp extension trick}}{=}&
    e^{\int_\Omega tf \; d\mu_n}
    \cdot 
    e^{Dt^2\sum_{i=0}^{n-1}\lip_{\theta,i}(F)^2}\\
    &\overset{\eqref{eq: rel between lips constant and oscillations}}{\leq}&
    e^{\int_\Omega tf \; d\mu_n}
    \cdot 
    e^{\frac{Dt^2}{n} |f|_{d^{(n)}}^2}
\end{eqnarray*}

Hence, since $\mu_n$ is a fully supported measure on $\Omega_n$ by the Bowen-Gibbs property, the Bobkov-G\ou tze theorem immediately yields that for every $\nu_n\in\M_1(\Omega_n)$,
\begin{equation}\label{eq: eq where BG thm is applied}
    W_{1,d^{(n)}}(\nu_n,\mu_n)
\leq
\sqrt{\frac{4D}{n} H(\nu_n|\mu_n)}.
\end{equation}
\eqref{eq: tei in infty volume} follows from \eqref{eq: eq where BG thm is applied} with $C_\tei=2\sqrt{D}$,  after taking a limit $n\to\infty$ in both sides of \eqref{eq: eq where BG thm is applied}.

\end{proof}

\subsection{Applications of the transport-entropy inequality}
We apply Theorem \ref{thm: my tei on CMS} to two problems: the effective uniqueness of the equilibrium states in $\bar d$ metric on the countable Markov shifts and the continuity of the map $\phi\mapsto \mu_\phi$ that sends a H\ou lder potential $\phi$ to its equilibrium (or Bowen-Gibbs) state $\mu_\phi$, which has first been raised in \cite{CQ98}.

\vspace{0.2cm}
\noindent
\textbf{\textit{\underline{The effective uniqueness of the equilibrium states in $\bar d$ metric on CMS.}}}

\vspace{0.2cm}

By combining Theorem \ref{thm: my tei on CMS} with Proposition \ref{prop: rel ent den=press gap}, one immediately obtains the following.
\begin{cor}\label{cor: EKP in bar d on CMS, recovery of Ruhr21}
    Let $\Omega$ be a topologically mixing countable Markov shift with the BIP property.
    Suppose $\phi$ is a H\ou lder potential on $\Omega$ with finite pressure, and $\mu_\phi$ is the associated Bowen-Gibbs measure.
    Then for every $\nu\in\M_{1,S}(\Omega)$,
    \begin{equation}\label{eq: EIE in bar d on CMS}
        \bar d(\nu,\mu_\phi)\leq C_{\tei} \sqrt{P(\phi)-\int_\Omega (I_\nu+\phi)d\nu}.
    \end{equation}
\end{cor}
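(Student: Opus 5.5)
The plan is to chain Theorem \ref{thm: my tei on CMS} with Proposition \ref{prop: rel ent den=press gap}. First, we check that the hypotheses of both results are met. Since $\Omega$ is topologically mixing with the BIP property and $\phi$ is H\"older with finite pressure, the discussion in Section \ref{section: vp for cms} (via \cite{s99}, \cite{s03}) provides a unique equilibrium state in the generalised sense. This state is the Bowen--Gibbs measure $\mu_\phi$, satisfying \eqref{eq: BG} with constant $P=P(\phi)$. The identification $P=P(\phi)$ is exactly what is recorded at the start of the proof of Proposition \ref{prop: rel ent den=press gap}, via \cite{s03}*{Theorem 1}.

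Next, fix $\nu\in\M_{1,S}(\Omega)$ and apply Proposition \ref{prop: rel ent den=press gap} with $\mu=\mu_\phi$. This shows that the limit $h(\nu|\mu_\phi)=\lim_n \frac1n H(\nu_n|\mu_{\phi,n})$ exists in $[0,\infty]$ and that
\[
h(\nu|\mu_\phi)=P(\phi)-\int_\Omega (I_\nu+\phi)\,d\nu .
\]
The integral on the right is well defined as an element of $[-\infty,\infty)$, since the proof of that proposition shows $f_\infty^+\in L^1(\nu)$. If the right-hand side is $+\infty$, the claimed inequality \eqref{eq: EIE in bar d on CMS} holds trivially because $\bar d\le 1$. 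So we may assume it is finite.

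Finally, apply Theorem \ref{thm: my tei on CMS} to obtain the finite-volume bound $W_{1,d^{(n)}}(\nu_n,\mu_{\phi,n})\le C_{\tei}\sqrt{\frac1n H(\nu_n|\mu_{\phi,n})}$ for every $n$. Letting $n\to\infty$, the left-hand side converges to $\bar d(\nu,\mu_\phi)$ by Proposition \ref{prop: bar d is defined in infinite volume}, and the right-hand side converges to $C_{\tei}\sqrt{h(\nu|\mu_\phi)}$ by continuity of the square root. Substituting the identity from the previous step gives \eqref{eq: EIE in bar d on CMS}.

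The only point requiring care, and the main (mild) obstacle, is the first step: making sure that the measure called the Bowen--Gibbs measure in Theorem \ref{thm: my tei on CMS} coincides with the generalised equilibrium state to which Theorem \ref{eq: GCB for Markov shifts} applies, and that its Gibbs constant $P$ equals the Gurevich pressure $P(\phi)$. I would settle this by citing Sarig's results \cite{s03} that under BIP and finite pressure the Gibbs measure exists, is unique, and is the equilibrium state in the sense of \eqref{eq: VP in incorporated form for cms}. Everything else is a direct substitution.
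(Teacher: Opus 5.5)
Your proposal is correct and follows the paper's route: the paper obtains the corollary by substituting the pressure-defect identity $h(\nu|\mu_\phi)=P(\phi)-\int_\Omega(I_\nu+\phi)\,d\nu$ of Proposition \ref{prop: rel ent den=press gap} into the infinite-volume transport-entropy inequality of Theorem \ref{thm: my tei on CMS}. Your additional steps are redundant but harmless: the trivial case where the pressure defect is infinite is automatic, and re-deriving the $n\to\infty$ limit is unnecessary since the theorem already states the infinite-volume inequality.
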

It should be noted that \eqref{eq: EIE in bar d on CMS} also extends  \cite{CK26}*{Theorem A} to the setup of countable Markov shifts, and also strengthens the main result of \cite{Ruh21} that previously established the effective uniqueness in the Wasserstein metric.
It is straightforward that Corollary \ref{cor: EKP in bar d on CMS, recovery of Ruhr21}, in light of Proposition \ref{prop: Was dist vs bar d dist}, also recovers the main theorem of \cite{Ruh21}.

\vspace{0.2cm}

\noindent
\textbf{\underline{\textit{Continuity of the map $\phi\mapsto\mu_\phi$ in $\bar d$ metric.}}}

\vspace{0.2cm}

Recently, in \cite{B25}, J. Bhullar studied how the RPF measures
on the full countable Markov shift vary with respect to the corresponding potential in the $\bar d$ metric.
She proved that the map $\phi\mapsto \mu_\phi$ is continuous as a map from metric space $(A, d_\theta)$ to the space $(\M_{1,S}(\Omega),\bar d)$, where $A_\theta$ is the space of the weakly H\ou lder strongly positive recurrent (SPR) potentials with finite pressure, and the metric $d_\theta$ is generated from the H\ou lder norm $\lVert \cdot  \rVert_\infty+|\cdot|_\theta$ \cite{B25}*{Theorem 1.1}. 
We note that the weakly H\ou lder condition is slightly weaker than the H\ou lder condition we are considering here, as it allows the potential to have infinite first variation $\var_1\phi=\infty$, otherwise they are equivalent.
Below, by applying Theorem \ref{thm: my tei on CMS}, we obtain a relevant result.
\begin{cor}\label{assertion: Holder continuity of Coelho-Quas map}
    Assume that $\Omega$ is a topologically mixing countable Markov shift with the BIP property.
    For H\ou lder potentials $\phi,\psi$ with finite pressures $P(\phi), P(\psi)$, let $\mu_\phi$ and $\mu_\psi$ be their Bowen-Gibbs (also RPF) measures.
    Then
    \begin{equation}\label{eq: Holder continuity of Coelho-Quas map}
        \bar d(\mu_\phi,\mu_\psi)
        \leq
        \sqrt{2}\min\{C_\tei(\phi), C_\tei(\psi)\}
        \;
        \lVert \phi-\psi\rVert_\infty^{1/2}.
    \end{equation}
    In particular, the map $\phi\mapsto \mu_\phi$ is local $\frac{1}{2}$-H\ou lder continuous as a map from the metric space $(\mathtt{H\ddot{o}l}(\Omega), \lVert \cdot \rVert_\infty )$ to the space $(\M_{1,S}(\Omega), \bar d)$, where $\mathtt{H\ddot{o}l}(\Omega)$ denotes the space of H\ou lder potentials, not necessarily bounded, with finite pressure.
\end{cor}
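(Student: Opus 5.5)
The plan is to apply Theorem \ref{thm: my tei on CMS} with $\nu=\mu_\psi$ and $\mu=\mu_\phi$, and then bound the relative entropy density $h(\mu_\psi|\mu_\phi)$ by $2\lVert\phi-\psi\rVert_\infty$. Symmetry of $\bar d$ then lets us exchange the roles of $\phi$ and $\psi$, which produces the minimum of the two constants. We may assume $\lVert\phi-\psi\rVert_\infty<\infty$, since otherwise the inequality is trivial.

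The first step is to compute $h(\mu_\psi|\mu_\phi)$ directly from the Bowen--Gibbs property \eqref{eq: BG} of both measures. For every admissible word $\omega_0^{n-1}$ and any $\omega$ in its cylinder, the two-sided bounds give
\begin{equation*}
\Big|\log\frac{\mu_\psi([\omega_0^{n-1}])}{\mu_\phi([\omega_0^{n-1}])}-\big(S_n\psi(\omega)-S_n\phi(\omega)\big)+n\big(P(\psi)-P(\phi)\big)\Big|\leq \log C_\phi+\log C_\psi .
\end{equation*}
Since $|S_n\psi-S_n\phi|\leq n\lVert\phi-\psi\rVert_\infty$, integrating against $\mu_\psi$, dividing by $n$ and letting $n\to\infty$ gives
\begin{equation*}
h(\mu_\psi|\mu_\phi)\leq \lVert\phi-\psi\rVert_\infty+|P(\psi)-P(\phi)|.
\end{equation*}
The limit exists by Proposition \ref{prop: rel ent den=press gap}. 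This route avoids integrability issues with $\int\phi\,d\mu_\psi$, because we only integrate a bounded quantity; the relevant difference is bounded by $n\lVert\phi-\psi\rVert_\infty+O(1)$.

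The second step is the pressure estimate $|P(\psi)-P(\phi)|\leq\lVert\phi-\psi\rVert_\infty$. This follows immediately from the definition \eqref{eq: gurevich pressure}: each periodic-orbit weight satisfies $e^{S_n\psi}\leq e^{S_n\phi+n\lVert\phi-\psi\rVert_\infty}$, and symmetrically. Hence $h(\mu_\psi|\mu_\phi)\leq 2\lVert\phi-\psi\rVert_\infty$. Inserting this into \eqref{eq: tei in infty volume} gives
\begin{equation*}
\bar d(\mu_\psi,\mu_\phi)\leq C_\tei(\phi)\sqrt{2}\,\lVert\phi-\psi\rVert_\infty^{1/2},
\end{equation*}
and interchanging $\phi$ and $\psi$ yields \eqref{eq: Holder continuity of Coelho-Quas map}.

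The main point needing care is justifying that the Bowen--Gibbs measure $\mu_\psi$ exists with a Bowen--Gibbs constant, and that the $O(1)$ error terms vanish after dividing by $n$. Existence is guaranteed by Sarig's theory under BIP and finite pressure, as already used in Theorem \ref{thm: my tei on CMS}. The local $\tfrac12$-Hölder statement then follows directly, since $C_\tei(\phi)$ is fixed for fixed $\phi$.
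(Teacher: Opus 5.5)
Your proposal is correct and follows essentially the paper's Proof I: compare the two Bowen--Gibbs measures on cylinders, use $|P(\phi)-P(\psi)|\le\lVert\phi-\psi\rVert_\infty$ to get $h(\mu_\psi|\mu_\phi)\le 2\lVert\phi-\psi\rVert_\infty$, and insert this into \eqref{eq: tei in infty volume}. The paper first derives the exact identity $h(\mu_\psi|\mu_\phi)=P(\phi)-P(\psi)+\int(\psi-\phi)\,d\mu_\psi$ via a variation bound, whereas you bound $S_n\psi-S_n\phi$ pointwise by $n\lVert\phi-\psi\rVert_\infty$; this is a harmless shortcut, and you also make explicit the symmetry step that yields the minimum of the two constants.
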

 Before proceeding to the proof, we make a few remarks on how the above statement relates to the earlier works \cites{CQ98,B25}.
\begin{rmk}

    \begin{itemize}
        \item[(1)] Corollary \ref{assertion: Holder continuity of Coelho-Quas map} establishes continuity of the map $\phi\mapsto\mu_\phi$ with respect to the uniform metric. 
        This distinguishes it from the continuity results of Bhullar \cite{B25} and Coelho and Quas \cite{CQ98}: the former considers the metric induced by the Hölder norm $\lVert\cdot\rVert_\theta:=\lVert\cdot\rVert_\infty+|\cdot|_\theta,$
        whereas the latter uses the norm $\lVert\cdot\rVert_{\mathtt{CQ}}:=\lVert\cdot\rVert_\infty+\sum_{n=1}^\infty\var_n(\cdot).$
        Both norms control the variations of the difference between the potentials as well as their uniform distance.
        In contrast, the uniform metric considered here imposes no restrictions on the variations.
        \item[(2)] 
        Both previous works \cites{CQ98,B25} address full shifts: Coelho and Quas \cite{CQ98} consider the finite-alphabet case, whereas Bhullar \cite{B25} treats the countable-alphabet case. In contrast, Corollary \ref{assertion: Holder continuity of Coelho-Quas map} applies to arbitrary topologically mixing countable Markov shifts with the BIP property. 
        
        \item[(3)] 
        The continuity results in \cite{CQ98}*{Theorem 3} and \cite{B25}*{Theorem 1.1} are qualitative: they establish continuity of the map $\phi\mapsto\mu_\phi$ in $\bar d$ metric without providing an explicit modulus of continuity. In contrast, estimate \eqref{eq: Holder continuity of Coelho-Quas map} gives an explicit modulus, yielding a local $\frac12$-Hölder exponent with respect to the uniform metric. 
        However, we do not expect the exponent $\frac12$ to be optimal. 
        Indeed, Bhullar \cite{B25}*{Theorem 1.3} establishes a local Lipschitz estimate for the map from $g$-functions to their corresponding $g$-measures, with respect to the uniform distance between the logarithms of the $g$-functions. This suggests that a local Lipschitz estimate may also hold for the larger class of potentials considered in Corollary \ref{assertion: Holder continuity of Coelho-Quas map}, although Bhullar's result does not directly establish such an estimate in this setting.
        
        \item[(4)] 
       Regarding the regularity assumptions on the potentials, Bhullar's result applies to weakly H\ou lder potentials with finite pressure satisfying the SPR condition, whereas Corollary \ref{assertion: Holder continuity of Coelho-Quas map} concerns H\ou lder potentials with finite pressure, without imposing the SPR condition. 
       Thus, Bhullar's result allows weaker regularity, while ours does not require SPR. The respective potential classes are therefore subject to different assumptions, and neither set of hypotheses is directly weaker than the other.

    \end{itemize}
\end{rmk}

 Below, we provide two proofs for Corollary \ref{assertion: Holder continuity of Coelho-Quas map}; one is direct and another is via the relation \eqref{eq: rel entropy dens=pressure gap in gener sense}.
 
\begin{proof}[Proof I: direct]
   We only consider the case where $\lVert\phi-\psi \rVert_\infty<\infty$, otherwise \eqref{eq: Holder continuity of Coelho-Quas map} is automatically true.
   For every $n\in\N$, consider the relative entropy $H(\mu_{\psi,n}| \mu_{\phi,n})$ between the finite-volume measures $\mu_{\psi,n}$ and $\mu_{\phi,n}$:
   \begin{multline}
       H(\mu_{\psi,n}| \mu_{\phi,n})
           =\notag
           \sum_{a_0^{n-1}\in\Lt_n}\mu_{\psi,n}([a_0^{n-1}])\log\frac{\mu_{\psi,n}([a_0^{n-1}])}{\mu_{\phi,n}([a_0^{n-1}])}\\
           =
           n (P(\phi)-P(\psi))
           +
           \sum_{a_0^{n-1}\in\Lt_n}\mu_{\psi,n}([a_0^{n-1}])
           [S_n\psi(a_0^{n-1}\omega_n^{\infty})
           -
           S_n\phi(a_0^{n-1}\omega_n^{\infty})]
           +O(1).
   \end{multline}
The second equation above follows from the Gibbs-Bowen property of the measures $\mu_\psi$ and $\mu_\phi$.
Then
\begin{equation*}
     H(\mu_{\psi,n}| \mu_{\phi,n})
     =
     n (P(\phi)-P(\psi))
     +
     \int_\Omega S_n(\psi-\phi) d\mu_{\psi}
     +O(\var_n S_n(\psi-\phi))
     +O(1).
\end{equation*}
Hence since $\var_n S_n(\psi-\phi)\leq \sum_{i=1}^n \var_i(\psi-\phi)\leq \sum_{i=1}^\infty (\var_i\psi +\var_i \phi)<\infty$, and by the shift-invariance of $\mu_\psi$, we obtain,
\begin{equation}
    H(\mu_{\psi,n}| \mu_{\phi,n})
     =
     n (P(\phi)-P(\psi))
     +
     n\int_\Omega (\psi-\phi) d\mu_{\psi}
     +O(1).
\end{equation}
After dividing both sides by $n$ and by taking limit as $n\to\infty$,
\begin{equation}\label{eq: nice formula for rel ent dens of BG meas}
    h(\mu_\psi|\mu_{\phi})
    =
    P(\phi)-P(\psi)
    +
    \int_\Omega (\psi-\phi) d\mu_{\psi}.
\end{equation}
By the elementary properties of the pressure, one has that $|P(\phi)-P(\psi)|\leq \lVert \phi-\psi\rVert_\infty$.
Hence, one concludes from \eqref{eq: nice formula for rel ent dens of BG meas} that
\begin{equation}\label{eq: rel betwen rel ent dens and sup norm of diff}
    h(\mu_\psi|\mu_{\phi})
    \leq
    2 \lVert \phi-\psi\rVert_\infty.
\end{equation}
Combination of \eqref{eq: rel betwen rel ent dens and sup norm of diff} and \eqref{eq: tei in infty volume} yields \eqref{eq: Holder continuity of Coelho-Quas map}.

\end{proof}

\begin{proof}[Proof II: via the relation \eqref{eq: rel entropy dens=pressure gap in gener sense}] We again consider $\lVert\phi-\psi\rVert_\infty<\infty$.
By \eqref{eq: rel entropy dens=pressure gap in gener sense}, 
\begin{equation}\label{eq: rel ent dens press gap rel for BG meas}
    h(\mu_\psi|\mu_\phi)
    =
    P(\phi)-\int_\Omega (I_{\mu_\psi}+\phi)d\mu_{\psi}.
\end{equation}
As $\lVert\phi-\psi\rVert_\infty<\infty$, one can split the integral $\int_\Omega [(I_{\mu_\psi}+\psi)+(\phi-\psi)]d\mu_{\psi}$.
Then since $\int_\Omega (I_{\mu_\psi}+\psi)d\mu_{\psi}=P(\psi)$, one concludes from \eqref{eq: rel ent dens press gap rel for BG meas} that 
\begin{equation*}
    h(\mu_\psi|\mu_{\phi})
    =
    P(\phi)-P(\psi)
    +
    \int_\Omega (\psi-\phi) d\mu_{\psi}.
\end{equation*}
The remainder of the argument proceeds as in Proof I.
    
\end{proof}

\section{Remarks and Discussions}\label{Section: Remarks}

We conclude with some remarks on the scope of the transport-entropy
inequality (TEI) and its relationship with other concentration
inequalities. 
We first compare the structural requirements of the
TEI and the EKP inequality and discuss the range of state
spaces and spatial structures in which transport-entropy inequalities
have been studied.
We then recall the connection between the TEI and
other concentration inequalities.

\subsection{The scope of the TEI and the EKP inequality}

The TEI can be formulated in substantially greater generality than
the EKP inequality. 
Indeed, the relative entropy density $h(\nu|\mu)$ and Ornstein's $\bar d$-metric are fundamentally probabilistic notions that can be defined directly in terms of probability measures, without reference to a potential or pressure function. 
By contrast, the EKP inequality relies on additional thermodynamic
structure to be meaningfully formulated. 
This includes the existence
and, in relevant settings, the finiteness of the pressure, together
with the appropriate one-sided integrability of the function
$I_\nu+\varphi$ appearing in the pressure-defect identity.

The broader scope of the TEI is also illustrated by results outside
the one-dimensional setting considered in this paper. 
In particular, in 2006, Wu
\cite{Wu06} established the finite-volume transport-entropy inequality \eqref{eq: fin volume tei}
for various lattice models of statistical physics under the
Dobrushin uniqueness condition. These include models with finite
state spaces, such as the Ising and Potts models, as well as models
with unbounded state spaces, including the $\varphi^4$ Euclidean
quantum field with single-site state space $\mathbb R$; see Theorem 4.3, Proposition 5.5 and Proposition 5.8 in \cite{Wu06}.

\subsection{The TEI and other concentration inequalities}

The TEI is closely related to other concentration inequalities.
As used in the proof of Theorem \ref{thm: my tei on CMS}, the theorem of Bobkov and
Götze \cite{BG99}*{Theorem 3.1} establishes the equivalence between the
finite-volume TEI and the corresponding finite-volume Gaussian
concentration bound (GCB). This equivalence provides a direct link
between transport-entropy inequalities and the theory of
concentration of measure, including Gaussian concentration bounds
and logarithmic Sobolev inequalities.

The relationship between the TEI and Gaussian concentration bound has
also been studied in infinite volume. 
Recently, Chazottes, Collet, and Redig
\cite{CCR26} investigated the equivalence between Gaussian
concentration bounds and transport-entropy inequalities in this
setting. In particular, their Theorems~6.1 and~6.2 establish an
equivalence between the TEI and weaker forms of the GCB.

\section*{Acknowledgments}

This research work was supported by the Research Council of Finland via the project “\emph{Fractals and randomness}”, grant no. 368817
and also partially supported by the Swedish
Research Council under grant no. 2021-06594 while the author was in residence at Institut Mittag-Leffler in Djursholm, Sweden, in 2026.
The author also acknowledges the hospitality of the Erwin Schrödinger International Institute for Mathematics and Physics, where part of this work was carried out.


\begin{bibdiv}
\begin{biblist}

\bib{B25}{article}{
   author={Bhullar, Jasmine},
   title={$\overline {d}$-continuity for countable state shifts},
   journal={Ergodic Theory Dynam. Systems},
   volume={46},
   date={2026},
   number={2},
   pages={466--489},
   issn={0143-3857},
   review={\MR{5012852}},
   doi={10.1017/etds.2025.10241},
}

\bib{BG99}{article}{
   author={Bobkov, S. G.},
   author={G\"otze, F.},
   title={Exponential integrability and transportation cost related to
   logarithmic Sobolev inequalities},
   journal={J. Funct. Anal.},
   volume={163},
   date={1999},
   number={1},
   pages={1--28},
   issn={0022-1236},
   review={\MR{1682772}},
   doi={10.1006/jfan.1998.3326},
}


\bib{CMU19}{article}{
   author={Chazottes, J.-R.},
   author={Moles, J.},
   author={Ugalde, E.},
   title={Gaussian concentration bound for potentials satisfying Walters
   condition with subexponential continuity rates},
   journal={Nonlinearity},
   volume={33},
   date={2020},
   number={3},
   pages={1094--1117},
   issn={0951-7715},
   review={\MR{4063959}},
   doi={10.1088/1361-6544/ab5918},
}
\bib{CCR26}{article}{
   author={Chazottes, J.-R.},
   author={Collet, P.},
   author={Redig, F.},
   title={Gaussian concentration, integral probability metrics, and coupling functionals for infinite lattice systems},
   journal={arXiv:2603.17861},
   date={2026},
}
\bib{CK26}{article}{
   author={Climenhaga, V.},
   author={Kumari, T.},
   title={Effective uniqueness of the measure of maximal entropy in Ornstein's $\bar d$-metric},
   journal={arXiv:2608.21703},
   date={2026},
}

\bib{CQ98}{article}{
   author={Coelho, Zaqueu},
   author={Quas, Anthony N.},
   title={Criteria for $\overline d$-continuity},
   journal={Trans. Amer. Math. Soc.},
   volume={350},
   date={1998},
   number={8},
   pages={3257--3268},
   issn={0002-9947},
   review={\MR{1422894}},
   doi={10.1090/S0002-9947-98-01923-0},
}

\bib{Dudley-book}{book}{
   author={Dudley, R. M.},
   title={Real analysis and probability},
   series={Cambridge Studies in Advanced Mathematics},
   volume={74},
   note={Revised reprint of the 1989 original},
   publisher={Cambridge University Press, Cambridge},
   date={2002},
   pages={x+555},
   isbn={0-521-00754-2},
   review={\MR{1932358}},
   doi={10.1017/CBO9780511755347},
}

\bib{Foll73}{article}{
   author={F\"{o}llmer, Hans},
   title={On entropy and information gain in random fields},
   journal={Z. Wahrscheinlichkeitstheorie und Verw. Gebiete},
   volume={26},
   date={1973},
   pages={207--217},
   review={\MR{348829}},
   doi={10.1007/BF00532723},
}

\bib{Georgii-book}{book}{
   author={Georgii, Hans-Otto},
   title={Gibbs measures and phase transitions},
   series={De Gruyter Studies in Mathematics},
   volume={9},
   publisher={Walter de Gruyter \& Co., Berlin},
   date={1988},
   pages={xiv+525},
   isbn={0-89925-462-4},
   review={ (2nd ed. 2011)},
   doi={10.1515/9783110850147},
}

\bib{GNS75}{article}{
   author={Gray, Robert M.},
   author={Neuhoff, David L.},
   author={Shields, Paul C.},
   title={A generalization of Ornstein's $\bar d$ distance with applications
   to information theory},
   journal={Ann. Probability},
   volume={3},
   date={1975},
   pages={315--328},
   issn={0091-1798},
   review={\MR{0368127}},
   doi={10.1214/aop/1176996402},
}

\bib{HKMM26}{article}{
   author={Haydarov, F.},
   author={Kadyrov, Sh.},
   author={Makhmudov, M.},
   author={Mamayusupov, Kh.},
   title={Effective Intrinsic Ergodicity for renewal-type potentials on S-gap shifts},
   journal={arXiv:2606.17031},
   date={2026},
}



\bib{Kad15}{article}{
   author={Kadyrov, Shirali},
   title={Effective uniqueness of Parry measure and exceptional sets in
   ergodic theory},
   journal={Monatsh. Math.},
   volume={178},
   date={2015},
   number={2},
   pages={237--249},
   issn={0026-9255},
   review={\MR{3394424}},
   doi={10.1007/s00605-014-0690-7},
}

\bib{MV2026}{article}{
   author={Makhmudov, Mirmukhsin},
   author={Verbitskiy, Evgeny},
   title={Gibbs properties of equilibrium states},
   journal={Nonlinearity},
   volume={39},
   date={2026},
   number={9},
   pages={1--14},
   doi={10.1088/1361-6544/ae9e7b},
}

\bib{Mar96}{article}{
   author={Marton, K.},
   title={Bounding $\overline d$-distance by informational divergence: a
   method to prove measure concentration},
   journal={Ann. Probab.},
   volume={24},
   date={1996},
   number={2},
   pages={857--866},
   issn={0091-1798},
   review={\MR{1404531}},
   doi={10.1214/aop/1039639365},
}

\bib{MMNL21}{article}{
   author={Maldonado, Cesar},
   author={Mu\~niz, Humberto},
   author={Nieto Loredo, Hugo},
   title={Concentration inequalities and rates of convergence of the ergodic
   theorem for countable shifts with Gibbs measures},
   journal={J. Difference Equ. Appl.},
   volume={27},
   date={2021},
   number={11},
   pages={1594--1607},
   issn={1023-6198},
   review={\MR{4343287}},
   doi={10.1080/10236198.2021.2000970},
}

\bib{Pfi02}{article}{
   author={Pfister, Charles-Edouard},
   title={Thermodynamical aspects of classical lattice systems},
   conference={
      title={In and out of equilibrium},
      address={Mambucaba},
      date={2000},
   },
   book={
      series={Progr. Probab.},
      volume={51},
      publisher={Birkh\"{a}user Boston, Boston, MA},
   },
   date={2002},
   pages={393--472},
   review={\MR{1901964}},
}

\bib{P26}{article}{
   author={Pollicott, M.},
   title={Effective intrinsic ergodicity for expanding interval maps},
   journal={arXiv:2606.12081},
   date={2026},
}

\bib{Polo-thesis}{thesis}{
  author={Polo, Fabrizio},
  title={Equidistribution in chaotic dynamical systems},
  year={2011},
  note={Thesis (Ph.D.)--The Ohio State University}
}


\bib{Ruh16}{article}{
  author={R{\"u}hr, Ren{\'e}},
  title={Effectivity of uniqueness of the maximal entropy measure on
         {$p$}-adic homogeneous spaces},
  journal={Ergodic Theory Dynam. Systems},
  volume={36},
  number={6},
  pages={1972--1988},
  year={2016}
}

\bib{Ruh21}{article}{
  author={R{\"u}hr, Ren{\'e}},
  title={Pressure inequalities for Gibbs measures of countable
         Markov shifts},
  journal={Dyn. Syst.},
  volume={36},
  number={2},
  pages={332--339},
  year={2021}
}


\bib{RS22}{article}{
   author={R\"{u}hr, Ren\'{e}},
   author={Sarig, Omri},
   title={Effective intrinsic ergodicity for countable state Markov shifts},
   journal={Israel J. Math.},
   volume={251},
   date={2022},
   number={2},
   pages={679--735},
   issn={0021-2172},
   review={\MR{4527555}},
   doi={10.1007/s11856-022-2436-x},
}

\bib{s99}{article}{
   author={Sarig, Omri M.},
   title={Thermodynamic formalism for countable Markov shifts},
   journal={Ergodic Theory Dynam. Systems},
   volume={19},
   date={1999},
   number={6},
   pages={1565--1593},
   issn={0143-3857},
   review={\MR{1738951}},
   doi={10.1017/S0143385799146820},
}

\bib{s03}{article}{
   author={Sarig, Omri},
   title={Existence of Gibbs measures for countable Markov shifts},
   journal={Proc. Amer. Math. Soc.},
   volume={131},
   date={2003},
   number={6},
   pages={1751--1758},
   issn={0002-9939},
   review={\MR{1955261}},
   doi={10.1090/S0002-9939-03-06927-2},
}

\bib{Shields-book}{book}{
   author={Shields, Paul C.},
   title={The ergodic theory of discrete sample paths},
   series={Graduate Studies in Mathematics},
   volume={13},
   publisher={American Mathematical Society, Providence, RI},
   date={1996},
   pages={xii+249},
   isbn={0-8218-0477-4},
   review={\MR{1400225}},
   doi={10.1090/gsm/013},
}

\bib{Wu06}{article}{
   author={Wu, Liming},
   title={Poincar\'{e} and transportation inequalities for Gibbs measures
   under the Dobrushin uniqueness condition},
   journal={Ann. Probab.},
   volume={34},
   date={2006},
   number={5},
   pages={1960--1989},
   issn={0091-1798},
   review={\MR{2271488}},
   doi={10.1214/009117906000000368},
}
	


\end{biblist}
\end{bibdiv}
\end{document}